\pdfoutput=1
\RequirePackage{ifpdf}
\ifpdf 
\documentclass[pdftex]{sigma}
\else
\documentclass{sigma}
\fi

\numberwithin{equation}{section}

\newtheorem{Theorem}{Theorem}[section]
\newtheorem{Corollary}[Theorem]{Corollary}
\newtheorem{Lemma}[Theorem]{Lemma}
\newtheorem{Proposition}[Theorem]{Proposition}
 { \theoremstyle{definition}
\newtheorem{Definition}[Theorem]{Definition}
\newtheorem{Example}[Theorem]{Example}
\newtheorem{Remark}[Theorem]{Remark} }

\newcommand{\RR}{\mathbb{R}}

\newcommand{\ZZ}{\mathbb{Z}}
\newcommand{\CC}{\mathbb{C}}

\begin{document}

\renewcommand{\PaperNumber}{011}

\FirstPageHeading

\ShortArticleName{Invariant Dirac Operators, Harmonic Spinors, and Vanishing Theorems in CR Geometry}

\ArticleName{Invariant Dirac Operators, Harmonic Spinors,\\ and Vanishing Theorems in CR Geometry}

\Author{Felipe LEITNER}

\AuthorNameForHeading{F.~Leitner}

\Address{Universit\"at Greifswald, Institut f\"ur Mathematik und Informatik,\\
Walter-Rathenau-Str.~47, D-17489 Greifswald, Germany}
\Email{\href{mailto:felipe.leitner@uni-greifswald.de}{felipe.leitner@uni-greifswald.de}}

\ArticleDates{Received July 23, 2020, in final form January 22, 2021; Published online February 04, 2021}

\Abstract{We study Kohn--Dirac operators $D_\theta$ on strictly pseudoconvex CR manifolds with ${\rm spin}^{\mathbb C}$ structure of weight $\ell\in{\mathbb Z}$. Certain components of $D_\theta$ are CR invariants. We also derive CR invariant twistor operators of weight~$\ell$. Harmonic spinors correspond to cohomology classes of some twisted Kohn--Rossi complex. Applying a Schr\"odinger--Lichnerowicz-type formula, we prove vanishing theorems for harmonic spinors and (twisted) Kohn--Rossi groups. We also derive obstructions to positive Webster curvature.}

\Keywords{CR geometry; spin geometry; Kohn--Dirac operator; harmonic spinors; Kohn--Rossi cohomology; vanishing theorems}

\Classification{32V05; 53C27; 58J50; 32L20}

\section{Introduction} \label{Sec1}

The classical Schr\"odinger--Lichnerowicz formula
\[
D^2 = \Delta +\frac{{\rm scal}}{4}
\]
of Riemannian geometry relates the square of the Dirac operator to the spinor Laplacian and scalar curvature.
This Weitzenb\"ock formula can be used to prove {\em vanishing theorems} for harmonic spinors on closed manifolds.
Via Hodge theory and Dolbeault's theorem this give rise to vanishing theorems for holomorphic cohomology
on K\"ahler manifolds (see~\cite{Hitch}). Moreover, via the {\em index theorem} for elliptic differential operators, the $\hat{\mathcal{A}}$-genus is understood to be an obstruction to positive scalar curvature on spin manifolds (see~\cite{Lich}).

Due to J.J.~Kohn there is also an {\em harmonic theory} for the Kohn--Laplacian on strictly pseudoconvex CR manifolds (see~\cite{FK,Koh}). Even though the Kohn--Laplacian is not elliptic, this theory shows that classes in the cohomology groups of the {\em tangential Cauchy--Riemann complex} (or {\em Kohn--Rossi complex}) are represented by harmonic forms. In particular, the (non-extremal) {\em Kohn--Rossi groups} are finite dimensional over closed manifolds.

In \cite{Tan} Tanaka describes this harmonic theory for the Kohn--Laplacian on $(p,q)$-forms with values in some CR vector bundle $E$
over (abstract) strictly pseudoconvex CR manifolds.
The Kohn--Laplacian is defined with respect to some pseudo-Hermitian structure $\theta$ and the corresponding {\em canonical connection}.
In particular, Tanaka derives Weitzenb\"ock formulas and proves vanishing theorems
for the Kohn--Rossi groups. On the other hand, in~\cite{Pet} R.~Petit introduces {\em spinor calculus} and Dirac-type operators to strictly pseudoconvex CR manifolds
with adapted pseudo-Hermitian structure (cf.~also \cite{LeiN1, Stadt}). Deriving some Schr\"odinger--Lichnerowicz-type formula
for the {\em Kohn--Dirac operator}, this approach gives rise to vanishing theorems for harmonic spinors over closed CR manifolds (cf.\ also~\cite{Kor}).

We study in this paper the Kohn--Dirac operator $D_\theta$ for ${\rm spin}^\CC$ structures of weight \mbox{$\ell\in\ZZ$}
on strictly pseudoconvex CR manifolds with adapted pseudo-Hermitian
structure~$\theta$. Our construction of~$D_\theta$ uses the Webster--Tanaka spinor derivative, only.
The Kohn--Dirac operator~$D_\theta$ does not behave naturally with respect to {\em conformal changes}
of the underlying pseudo-Hermitian structure. However, similar as in K\"ahler geometry, the spinor bundle~$\Sigma$ decomposes with respect to the CR structure
into eigenbundles~$\Sigma^{\mu_q}$
of certain eigenvalues~$\mu_q$. For $\mu_q=-\ell$ the restriction~$\mathcal{D}_\ell$ of the Kohn--Dirac operator to~$\Gamma\big(\Sigma^{\mu_q}\big)$
acts CR-covariantly.
This observation gives rise to CR invariants for the underlying strictly pseudoconvex CR manifold.

Complementary to $D_\theta$ we also have {\em twistor operators}.
In the spin case \cite{LeiN1} we discuss special solutions of the corresponding twistor equation, which realize
some lower bound for the square of the first non-zero eigenvalue of the Kohn--Dirac operator $D_\theta$.
For $\mu_q=\ell$ the corresponding twistor operator $\mathcal{P}_\ell$ is a CR invariant.

Analyzing the Clifford multiplication on the spinor bundle for ${\rm spin}^\CC$ structures over strictly pseudoconvex CR manifolds
shows that the Kohn--Dirac operator is a square root of the Kohn--Laplacian acting on $(0,q)$-forms with values in some CR line bundle $E$.
Thus, our discussion of the Kohn--Dirac operator fits well to Kohn's harmonic theory, as described in \cite{Tan}.
In particular, harmonic spinors correspond to cohomology classes of certain twisted Kohn--Rossi complexes. Computing the curvature term of
the corresponding Schr\"odinger--Lichnerowicz-type formula gives rise to vanishing theorems for twisted Kohn--Rossi groups.

For example,
on a closed, strictly pseudoconvex CR manifold $M$ of even CR dimension $m\geq 2$ with spin structure given by a square root $\sqrt{\mathcal{K}}$
of the canonical line bundle,
we have for $\mu_q=\ell=0$ the Schr\"odinger--Lichnerowicz-type formula
\[
\mathcal{D}_0^*\mathcal{D}_0 = \Delta^{\rm tr} + \frac{{\rm scal}^{\rm W}}{4}
\]
for the CR-covariant component $\mathcal{D}_0$ of $D_\theta$,
where $\Delta^{\rm tr}$ denotes the {\em spinor sub-Laplacian} and ${\rm scal}^{\rm W}$ is the Webster scalar curvature.
In this case harmonic spinors correspond to cohomology classes in the
Kohn--Rossi group $H^{\frac{m}{2}}\big(M,\sqrt{\mathcal{K}}\big)$.
Positive Webster scalar curvature ${\rm scal}^{\rm W}>0$ on $M$ immediately implies that this Kohn--Rossi group is trivial.
On the other hand, \mbox{$H^{\frac{m}{2}}\big(M,\sqrt{\mathcal{K}}\big)\neq\{0\}$} poses an obstruction to the existence of any
adapted pseudo-Hermitian structure~$\theta$ on~$M$ of positive Webster scalar curvature.
In this case the {\em Yamabe invariant} in~\cite{JL} for the given CR structure is non-positive.

In Sections~\ref{Sec2} to~\ref{Sec5} we introduce CR manifolds and pseudo-Hermitian geometry with ${\rm spin}^\CC$ structures.
In Section \ref{Sec6} the Kohn--Dirac and twistor operators are constructed. The CR-covariant
components $\mathcal{D}_\ell$ and $\mathcal{P}_\ell$ are determined in Section~\ref{Sec7}. In Section~\ref{Sec8} we recall
the Schr\"odinger--Lichnerowicz-type formula and derive a basic vanishing theorem for harmonic spi\-nors (see Proposition~\ref{vani}).
Section~\ref{Sec9} briefly reviews the harmonic theory for the Kohn--Laplacian. In Section~\ref{Sec10} we derive
 vanishing theorems for twisted Kohn--Rossi groups. In Section~\ref{Sec11} we discuss CR circle bundles of K\"ahler manifolds and
relate holomorphic cohomology groups to Kohn--Rossi groups. Finally, in Section~\ref{Sec12} we construct closed, strictly pseudoconvex CR mani\-folds
with $H^{\frac{m}{2}}\big(M,\sqrt{\mathcal{K}}\big)\neq 0$.

\section{Strictly pseudoconvex CR structures} \label{Sec2}

Let $M^n$ be a connected and orientable, real $C^\infty$-manifold of odd dimension $n=2m+1\geq 3$, equipped with a pair $(H(M),J)$
of a corank $1$ subbundle $H(M)$ of the tangent bundle $T(M)$ and a bundle endomorphism $J\colon H(M)\to H(M)$
with $J^2(X)=-X$ for any $X\in H(M)$. The Lie bracket $[\cdot,\cdot]$ of vector fields defines a bilinear skew pairing
\begin{align*}\{\cdot,\cdot\}\colon \ H(M)\times H(M) &\to T(M)/H(M),\\
 (X,Y)&\mapsto -[X,Y]\ \operatorname{mod} H(M),
\end{align*}
with values in the real line bundle $T(M)/H(M)$.

We call the pair $(H(M),J)$ a {\em strictly pseudoconvex CR structure} on $M$ (of hypersurface type and CR dimension $m\geq 1$)
if the following conditions are satisfied:
\begin{itemize}\itemsep=0pt
\item $\{JX,Y\}+\{X,JY\}=0$ $\operatorname{mod} H(M)$ for any $X,Y\in H(M)$ and
\item the symmetric pairing $\{\cdot,J\cdot\}$ on $H(M)$ is definite, i.e., $\{X,JX\}\neq 0$ for any $X\neq 0$,
\item the {\em Nijenhuis tensor} $N_J(X,Y)= [X,Y]-[JX,JY]+J([JX,Y]+[X,JY])$ vanishes identically for any $X,Y\in H(M)$.
\end{itemize}
Throughout this paper we will deal with strictly pseudoconvex CR structures on $M$.
For example, in the generic case when the Levi form is non-degenerate,
the smooth boundary of a {\em domain of holomorphy} in $\CC^{m+1}$ is strictly pseudoconvex.

The complex structure $J$ extends $\CC$-linearly to $H(M)\otimes\CC$, the complexification of the {\em Levi distribution}, and induces a decomposition
\[
H(M)\otimes\CC = T_{10}\oplus T_{01}
\]
into $\pm {\rm i}$-eigenbundles. Then a complex-valued $p$-form $\eta$ on $M$ is said to be of type $(p,0)$ if $\iota_Z\eta=0$ for all $Z\in T_{01}$.
This gives rise to the complex vector bundle $\Lambda^{p,0}(M)$ of $(p,0)$-forms on~$M$.
For the $(m+1)$st exterior power $\Lambda^{m+1,0}(M)$ of $\Lambda^{1,0}(M)$ we write $\mathcal{K}=\mathcal{K}(M)$.
This is the {\em canonical line bundle} of the CR manifold $M$ with first Chern class $c_1(\mathcal{K})\in H^2(M,\ZZ)$.
Its dual is the {\em anticanonical bundle}, denoted by~$\mathcal{K}^{-1}$.

When dealing with a strictly pseudoconvex CR manifold, we will often assume the existence and choice
of some $(m+2)$nd root $\mathcal{E}(1)$ of the anticanonical bundle $\mathcal{K}^{-1}$,
that is a complex line bundle over $M$ with
\[
\mathcal{E}(1)^{m+2} =\mathcal{K}^{-1} .
\]
The dual bundle of this root is denoted by $\mathcal{E}(-1)$. Then, for any integer $p\in\ZZ$, we have the~$p$th power $\mathcal{E}(p)$
of $\mathcal{E}(1)$. We call $p$ the {\em weight} of~$\mathcal{E}(p)$.
In particular, the canonical bundle $\mathcal{K}$ has weight $-(m+2)$, whereas
the anticanonical bundle $\mathcal{K}^{-1}$ has weight~$m+2$.

In general, the existence of an $(m+2)$nd root $\mathcal{E}(1)$ is restrictive to the global nature of the underlying CR structure on~$M$.
For the application of {\em tractor calculus} in CR geometry this assumption is basic.
In fact, the standard homogeneous model of CR geometry on the sphere allows a natural choice for $\mathcal{E}(1)$ (see~\cite{CG1}).
For our treatment of ${\rm spin}^\CC$ structures in CR geometry the choice of some~$\mathcal{E}(1)$ is useful as well.

\section{Pseudo-Hermitian geometry} \label{Sec3}

Let $\big(M^{2m+1}, H(M), J\big)$, $m\geq 1$, be strictly pseudoconvex.
Since $M$ is orientable, there exists some $1$-form $\theta$ on $M$, whose kernel $\operatorname{Ker}(\theta)$
defines the contact distribution~$H(M)$. The differen\-tial~${\rm d}\theta$ is a non-degenerate $2$-form on $H(M)$, and
the conditions \[ \iota_T\theta=\theta(T)=1\qquad\mbox{and}\qquad \iota_T{\rm d}\theta=0\]
define a unique vector field $T=T_\theta$, which is the {\em Reeb vector field} of $\theta$.
We use to call $T$ the {\em characteristic vector}. The tangent
bundle $T(M)$ splits into the direct sum
\[T(M) = H(M) \oplus \RR T\]
with corresponding projection $\pi_\theta\colon T(M)\to H(M)$. We use to say that vectors in $H(M)$ are {\em transverse}
(to the characteristic direction of~$T$).
Note that $\mathfrak{L}_T\theta=\mathfrak{L}_T{\rm d}\theta=0$ for the Lie derivatives
with respect to~$T_\theta$.

Furthermore,
\[
g_\theta(X,Y) := \frac{1}{2}{\rm d}\theta(X, JY),\qquad X,Y\in H(M),\]
defines a non-degenerate, symmetric bilinear form, i.e., a metric on $H(M)$, which is
either positive or negative definite.
In case $g_\theta$ is positive definite, we call $\theta\in\Omega^1(M)$ an adapted {\em pseudo-Hermitian structure} for $\big(M^{2m+1}, H(M), J\big)$. Note that any two pseudo-Hermitian structures $\theta$ and $\tilde{\theta}$
differ only by some positive function or {\em conformal scale}, i.e., $\tilde{\theta}={\rm e}^{2f}\theta$ for some $f\in C^\infty(M)$.

Let us fix some adapted pseudo-Hermitian structure $\theta$ on~$M$. To $\theta$ we have the {\em Webster--Tanaka connection}~$\nabla^{\rm W}$ on $T(M)$ (see~\cite{Tan,Web}),
for which by definition the characteristic vector~$T$, the metric $g_\theta$ and the complex structure~$J$ on $H(M)$ are parallel.
Hence, the {\em structure group} of~$\nabla^{\rm W}$ is the unitary group~${\rm U}(m)$.
In characteristic direction, we have
\begin{equation*} \label{WTC} \nabla^{\rm W}_{T}X = \frac{1}{2} ([T,X]-J[T,JX] )\end{equation*}
for $X\in\Gamma(H(M))$.

The torsion is given by some obligatory part
 \[ \nabla^{\rm W}_XY-\nabla^{\rm W}_YX - [X,Y] ={\rm d}\theta(X,Y)T \]
with transverse $X$, $Y$ in $H(M)$ and, furthermore, by
\begin{gather*}
\operatorname{Tor}^{\rm W}(T,X) = -\frac{1}{2} ([T,X]+J[T,JX] ),\qquad X\in H(M) . \end{gather*}
We call the latter part
\[
\tau(X) := \operatorname{Tor}^{\rm W}(T,X),\qquad X\in H(M),
\]
{\em Webster torsion tensor} of $\theta$ on $(M,H(M),J)$.
This is a symmetric and trace-free tensor. The composition $\tau\circ J=-J\circ \tau$ is symmetric and trace-free as well.
We set $\tau(X,Y)=g_\theta(\tau X,Y)$, $X,Y\in H(M)$.

As usual the curvature operator $R^{\rm W}(X,Y)$ of $\nabla^{\rm W}$ is defined by
\[
R^{\rm W}(X,Y) := \nabla^{\rm W}_X\nabla^{\rm W}_Y- \nabla^{\rm W}_Y\nabla^{\rm W}_X- \nabla^{\rm W}_{[X,Y]}
\]
for any $X,Y\in T(M)$. Since $\nabla^{\rm W}$ is metric,
$R^{\rm W}(X,Y)$ is skew-symmetric with respect to~$g_\theta$ on~$H(M)$.
The first Bianchi identity for $X,Y,Z\in H(M)$ is given by the cyclic sum
\begin{equation}\label{bianchi}
\sum_{XYZ}R^{\rm W}(X,Y)Z = \sum_{XYZ} {\rm d}\theta(X,Y)\tau(Z) .
\end{equation}

For any $X\in T(M)$, the {\em Webster--Ricci endomorphism} $\operatorname{Ric}^{\rm W}(X)$ is the $g_\theta$-trace of $R(X,\cdot)(\cdot)$, and
the {\em Webster scalar curvature} is the trace $\operatorname{scal}^{\rm W}=\operatorname{tr}_\theta \operatorname{Ric}^{\rm W}$ of the Webster--Ricci tensor on~$H(M)$.
On the other hand, the {\em pseudo-Hermitian Ricci form} is given by
\[
\rho_\theta(X,Y) := \frac{1}{2}\operatorname{tr}_\theta\big( g_\theta\big(R^{\rm W}(X,Y,J\cdot,\cdot)\big)\big)
\]
for any $X,Y\in H(M)$.
Then we have
\[
\operatorname{Ric}^{\rm W}(X,Y) = \rho_\theta(X,JY) + 2(m-1)\tau(X,JY)
\]
for any $X,Y\in H(M)$, where $\rho_\theta$ corresponds to the $J$-invariant and $\tau$ is the $J$-antiinvariant part of $\operatorname{Ric}^{\rm W}$ on~$H(M)$.

If $m\geq 2$ and the pseudo-Hermitian Ricci form $\rho_\theta$ is a multiple of ${\rm d}\theta$ we call $\theta$ a {\em pseudo-Einstein structure}
on the CR manifold $\big(M^{2m+1},H(M),J\big)$ (see~\cite{Lee}). For $m=1$ this condition is vacuous. However, for $m>1$ the pseudo-Einstein condition implies
 $\operatorname{Ric}^{\rm W}(T,JX)=\frac{1}{4m}X\big({\rm scal}^{\rm W}\big)$ for any~$X\in H(M)$. This is a suitable replacement for the Einstein condition when $m=1$ (see~\cite{CY}).
In any case we have $\rho_\theta=\frac{{\rm scal}^{\rm W}}{4m}{\rm d}\theta$ and the Webster scalar curvature of some pseudo-Einstein structure $\theta$
need not be constant. In fact, it
is constant if and only if
\[\operatorname{Ric}^{\rm W}(T) = \operatorname{tr}_\theta\big(\nabla^{\rm W}_{\cdot}\tau\big)(\cdot) = 0.\]

\section[Spin C structures]{$\boldsymbol{{\rm Spin}^\CC}$ structures} \label{Sec4}

Recall that the group ${\rm Spin}^\CC(2m)$ is a {\em central extension} of ${\rm SO}(2m)$ given by the exact sequence
\[
1 \to \ZZ_2 \to {\rm Spin}^\CC(2m) \to {\rm SO}(2m)\times {\rm U}(1) \to 1.
\]
This gives a twisted product ${\rm Spin}^\CC(2m)={\rm Spin}(2m)\times_{\ZZ_2}{\rm U}(1)$ with the spin group. We have
${\rm Spin}^\CC(2m)/{\rm U}(1)\cong {\rm SO}(2m)$, and a group homomorphism $\lambda\colon {\rm Spin}^\CC(2m)\to {\rm SO}(2m)$ as well as
${\rm Spin}^\CC(2m)/{\rm Spin}(2m)\cong {\rm U}(1)$. Note that there is also a canonical homomorphism
\[
j\colon \ {\rm U}(m) \to {\rm Spin}^{\CC}(2m),
\]
which is the lift of $\iota\times \det\colon {\rm U}(m)\to {\rm SO}(2m)\times {\rm U}(1)$.

Now let $\theta$ be a pseudo-Hermitian form on the strictly pseudoconvex CR manifold $\big(M^{2m+1},\allowbreak H(M),J\big)$, $m\geq 1$.
This gives rise to the metric $g_\theta$ on the Levi distribution $H(M)$.
We denote by ${\rm SO} (H(M))$ the principal ${\rm SO}(2m)$-bundle of orthonormal frames in $H(M)$.
A {\em ${\rm spin}^\CC$ structure} to~$\theta$ on~$M$ is a reduction
$(P,\Lambda)$ of the frame bundle~${\rm SO}(H(M))$. This means here, $P\to M$ is some principal ${\rm Spin}^\CC(2m)$-bundle with fiber bundle map
$\Lambda\colon P\to {\rm SO}(H(M))$
such that $\Lambda(p\cdot s)=\Lambda(p)\cdot \lambda(s)$ for all $p\in P$ and $s\in {\rm Spin}^\CC(2m)$.

Let $(P,\Lambda)$ be some fixed ${\rm spin}^\CC$ structure for $(M,\theta)$. Then $P_1:=P/{\rm Spin}(2m)\to M$ is a~principal ${\rm U}(1)$-bundle,
and we denote the associated complex line bundle by $L\to M$. This is the {\em determinant bundle} of the ${\rm spin}^\CC$ structure.
The corresponding fiber bundle map $\Lambda_1\colon P\to {\rm SO}(H(M))\times P_1$ over~$M$ is a twofold covering.
On the other hand, let~$L(\beta)\to M$ be a complex line bundle
determined by some integral class $\beta\in H^2(M,\ZZ)$. Then, if
\[
\beta \equiv -c_1(\mathcal{K}) \ \operatorname{mod} 2,
\]
there exists a ${\rm spin}^\CC$ structure $(P,\Lambda)$ to $\theta$ on $M$
with determinant bundle $L(\beta)$.

There exists always the {\em canonical ${\rm spin}^\CC$ structure} to $\theta$ on $M$, which stems from the lift $j\colon {\rm U}(m)\to {\rm Spin}^{\CC}(2m)$.
The corresponding determinant bundle is $\mathcal{K}^{-1}$. All other ${\rm spin}^\CC$ structures differ from the canonical one by multiplication
with a principal ${\rm U}(1)$-bundle, related to some line bundle $E(\alpha)$, $\alpha\in H^2(M,\ZZ)$.
The corresponding determinant bundle~$L(\beta)$ satisfies $E(\alpha)^2=\mathcal{K}\otimes L(\beta)$.
${\rm Spin}^\CC$ structures with the same determinant bundle~$L(\beta)$ are parametrized by the
elements in $H^1(M,\ZZ_2)$ (see~\cite{LMS,Pet}).

In particular, if $c_1(\mathcal{K})\equiv 0$ $\operatorname{mod} 2$, then $\theta$ on $M$ admits some ${\rm spin}^\CC$ structure with trivial determinant bundle.
This represents an {\em ordinary} spin structure for the Levi distribution~$H(M)$ with metric~$g_\theta$ (cf.~\cite{LeiN1}).
More generally, let us consider the powers $\mathcal{E}(p)$, $p\in\ZZ$, of an $(m+2)$nd root $\mathcal{E}(1)$ of $\mathcal{K}^{-1}$.
Then $-c_1(\mathcal{K})=(m+2)c_1(\mathcal{E}(1))$, and a ${\rm spin}^\CC$ structure for $(M,\theta)$ with determinant bundle $L=\mathcal{E}(p)$ exists when
\begin{equation*}\label{condi1} (m+2-p)c_1(\mathcal{E}(1)) \equiv 0\quad \operatorname{mod} 2.\end{equation*}

\begin{Lemma} Let $\mathcal{E}(1)$ be an $(m+2)$nd root of $\mathcal{K}^{-1}\to M^{2m+1}$. Then $\theta$ on $M$ admits a ${\rm spin}^\CC$ structure
with determinant bundle $L=\mathcal{E}(p)$, $p\in\ZZ$, if
\begin{enumerate}\itemsep=0pt
\item[$(i)$] $\mathcal{E}(1)$ itself admits some square root, or
\item[$(ii)$] $m$ and $p\in\ZZ$ are odd, or
\item[$(iii)$] $m$ and $p$ are even.
\end{enumerate}
\end{Lemma}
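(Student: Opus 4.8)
The plan is to verify the cohomological criterion $(m+2-p)\,c_1(\mathcal{E}(1)) \equiv 0 \pmod 2$ in each of the three listed cases, using the fact stated just above the lemma: a ${\rm spin}^\CC$ structure to $\theta$ with determinant bundle $L=\mathcal{E}(p)$ exists precisely when this congruence in $H^2(M,\ZZ)$ holds. So the entire argument reduces to elementary parity bookkeeping with the class $c_1(\mathcal{E}(1))\in H^2(M,\ZZ)$ and the integer $m+2-p$.

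First I would treat case $(i)$. If $\mathcal{E}(1)$ admits a square root, say $\mathcal{E}(1)=S^{\otimes 2}$ for some line bundle $S$, then $c_1(\mathcal{E}(1)) = 2\,c_1(S)$ is divisible by $2$ in $H^2(M,\ZZ)$, hence $(m+2-p)\,c_1(\mathcal{E}(1)) \equiv 0 \pmod 2$ regardless of the parity of $m+2-p$. This immediately gives the spin$^\CC$ structure by the quoted existence result. For cases $(ii)$ and $(iii)$ I would instead make the \emph{coefficient} $m+2-p$ even: if $m$ and $p$ are both odd, then $m+2-p$ is odd${}+{}$even${}-{}$odd${}={}$even; if $m$ and $p$ are both even, then $m+2-p$ is even${}+{}$even${}-{}$even${}={}$even. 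In either case $2\mid (m+2-p)$, so $(m+2-p)\,c_1(\mathcal{E}(1))$ is an even multiple of an integral class, hence $\equiv 0 \pmod 2$, and the existence criterion is again satisfied.

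To present this cleanly I would combine the observations: in case $(i)$ the class $c_1(\mathcal{E}(1))$ is itself even; in cases $(ii)$ and $(iii)$ the integer $m+2-p$ is even; in all three cases the product $(m+2-p)\,c_1(\mathcal{E}(1))$ lies in $2\,H^2(M,\ZZ)$, which is exactly the condition for the desired ${\rm spin}^\CC$ structure with determinant bundle $\mathcal{E}(p)$ to exist. Since $-c_1(\mathcal{K}) = (m+2)\,c_1(\mathcal{E}(1))$, one may equivalently phrase the requirement as $c_1(L) \equiv -c_1(\mathcal{K}) \pmod 2$ with $c_1(L) = p\,c_1(\mathcal{E}(1))$, which reproduces the same congruence $(m+2-p)\,c_1(\mathcal{E}(1)) \equiv 0 \pmod 2$; I would remark on this only if it streamlines the write-up.

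There is essentially no hard step here: the content is entirely the parity arithmetic together with the structural existence statement recalled immediately before the lemma, so the ``main obstacle'' is merely organizing the three cases so that the common conclusion $(m+2-p)\,c_1(\mathcal{E}(1)) \in 2\,H^2(M,\ZZ)$ is visibly reached in each. One should be slightly careful that divisibility by $2$ is meant in the group $H^2(M,\ZZ)$ (which may have $2$-torsion), but since in cases $(ii)$, $(iii)$ we multiply an arbitrary integral class by the even integer $m+2-p$, and in case $(i)$ we have an honest factorization $c_1(\mathcal{E}(1)) = 2\,c_1(S)$, there is no subtlety — the conclusion is a genuine equality of the form (class) ${}= 2\cdot{}$(class), which is all that is needed.
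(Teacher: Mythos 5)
Your proof is correct and follows exactly the route the paper intends: the paper states the criterion $(m+2-p)\,c_1(\mathcal{E}(1))\equiv 0 \pmod 2$ immediately before the lemma and leaves the three-case parity check implicit, which is precisely what you carry out. Nothing is missing.
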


We say that a ${\rm spin}^\CC$ structure with determinant bundle $L=\mathcal{E}(p)$ has {\em weight} $p$.
In the following we assume that ${\rm spin}^\CC$ structures to $\theta$ on $M$ exist for all {\em necessary} weights $p\in\ZZ$.

\section{Spinors and connections} \label{Sec5}

Let $\big(M^{2m+1},H(M),J\big)$ be a strictly pseudoconvex CR manifold of hypersurface type and CR dimension $m\geq 1$,
and let $(P,\Lambda)$ be a ${\rm spin}^\CC$ structure of weight $\ell\in\ZZ$ for some given pseudo-Hermitian form $\theta$ on $M$.
The choice of $(P,\Lambda)$ gives rise to an
associated spinor bundle
\[ \Sigma(H(M)) := P\times_{\rho_{2m}}\Sigma\]
over $M$, where $\rho_{2m}$ denotes the representation of ${\rm Spin}^\CC(2m)$ on the complex spinor module $\Sigma$.
Note that the center ${\rm U}(1)$ acts by complex scalar multiplication on~$\Sigma$.
The spinor bundle has $\operatorname{rk}_\CC(\Sigma(H(M))=2^m$.

The spinor bundle $\Sigma(H(M))$ is equipped with a Hermitian inner product $\langle\cdot,\cdot\rangle$, and we have a Clifford multiplication
\begin{align*}
c \colon \ H(M) \otimes \Sigma(H(M)) &\to \Sigma(H(M)),\\
 (X,\phi) &\mapsto X\cdot \phi,
\end{align*}
which satisfies
\[
\langle X\cdot \psi,\phi\rangle = -\langle \psi,X\cdot \phi\rangle
\]
for any transverse $X\in H(M)$ and $\phi\in\Sigma(H(M))$, given at some point of~$M$.
The multiplica\-tion~$c$ extends to the complex Clifford bundle $\CC{\rm l}(H(M))$ of the Levi distribution.

The Webster--Tanaka connection $\nabla^{\rm W}$ to $\theta$ stems from a principal fiber bundle
connection on the unitary frame bundle, contained in ${\rm SO}(H(M))$. This gives rise to a covariant derivative~$\nabla^{\rm W}$
on any root of $\mathcal{K}^{-1}$ and its powers,
in particular, for $\mathcal{E}(1)$ and the determinant bundle $L=\mathcal{E}(\ell)$.

Recall that $(P,\Lambda)$ induces a twofold covering map $P\to {\rm SO}(H(M))\times P_1$.
Then the Webster--Tanaka connection lifts to~$P$, which in turn gives rise to some covariant derivative on spinor
fields:
\begin{align*}
\nabla^\Sigma \colon \ \Gamma(T(M)) \otimes \Gamma(\Sigma(H(M))&\to \Gamma(\Sigma(H(M)),\\
 (X,\phi) &\mapsto \nabla^\Sigma_X\phi.
\end{align*}
Note that this construction does not need an {\em auxiliary connection} on the determinant bundle~$L$.
We only use the Webster--Tanaka connection on~$L$ and call~$\nabla^\Sigma$
the {\em Webster--Tanaka spinor derivative} to the given ${\rm spin}^\CC$ structure of weight~$\ell$.

The spinor derivative satisfies the rules
\begin{gather*}
\nabla^\Sigma_Y (X\cdot \phi) = \big(\nabla^{\rm W}_YX\big)\cdot \phi + X\cdot \nabla^\Sigma_Y\phi\qquad \mbox{and} \qquad
Y\langle \phi,\psi\rangle = \big\langle \nabla^\Sigma_Y \phi,\psi\big\rangle + \big\langle \phi,\nabla_Y^\Sigma \psi\big\rangle
\end{gather*}
for any $X\in\Gamma(H(M))$, $Y\in\Gamma(T(M))$ and $\phi,\psi\in\Gamma(\Sigma(H(M)))$.
Locally, with respect to some orthonormal frame $s=(s_1,\dots,s_{2m})$, the spinor derivative is given by the formula
\[
\nabla^\Sigma\phi = {\rm d}\phi + \frac{1}{2}\sum_{j<k}^{2m}g_\theta\big(\nabla^{\rm W}s_j,s_k\big)s_js_k\cdot \phi + \frac{1}{2}A^{{\rm W},s} \phi,
\]
where $A^{{\rm W},s}$ denotes the local Webster--Tanaka connection form on~$P_1$ with values in~${\rm i}\RR$.
The curvature~$R^\Sigma$ of the spinor derivative~$\nabla^\Sigma$ is then given by
\begin{align*}
R^\Sigma(X,Y)\phi &= \nabla^\Sigma_X\nabla^\Sigma_Y\phi- \nabla^\Sigma_Y\nabla^\Sigma_X\phi-\nabla^\Sigma_{[X,Y]}\phi\\
&= \frac{1}{4}\sum_{j,k=1}^{2m} g_\theta\big(R^{\rm W}(X,Y)s_j,s_k\big)s_js_k\cdot \phi + \frac{1}{2}{\rm d}A^{\rm W}(X,Y)\phi
\end{align*}
for any $X,Y\in TM$ and spinor $\phi\in\Gamma(\Sigma(H(M)))$. Note that
\begin{equation}\label{curline}
{\rm d}A^{\rm W} = \frac{-{\rm i}\ell}{m+2}\rho_\theta
\end{equation}
is a multiple of the pseudo-Hermitian Ricci form $\rho_\theta$.

The underlying pseudo-Hermitian form $\theta$ gives rise to further structure on the spinor bundle $\Sigma(H(M))$.
In fact, recall that ${\rm d}\theta$ is $\nabla^{\rm W}$-parallel and {\em basic}, i.e., $\iota_T{\rm d}\theta=0$.
We set $\Theta:=\frac{{\rm i}\,{\rm d}\theta}{2}\in \CC{\rm l}(H(M))$ in the complex Clifford bundle. Then
$\Theta$ acts by real eigenvalues $\mu_q=m-2q$, $q\in\{0,\dots,m\}$ on $\Sigma(H(M))$. We obtain
the decomposition
\[
\Sigma(H(M)) = \bigoplus_{q=0}^m \Sigma^{\mu_q}(H(M))
\]
into $\Theta$-eigenspaces $\Sigma^{\mu_q}(H(M))$ of rank
$\left( \begin{smallmatrix} m\\ q\end{smallmatrix}\right)$ to the eigenvalue~$\mu_q$ (see~\cite{Pet}).
We call the bundles to the $\Theta$-eigenvalues $\mu_q=\pm m$ {\em extremal}.
(We also define $\Sigma^{a}=\{0\}$ to be trivial for any $a>m$ and $a<-m$.)
Accordingly, we can decompose any spinor~$\phi$ on~$M$
into
\[
\phi = \sum_{q=0}^m \phi_{\mu_q},
\]
where $\Theta\phi_{\mu_q}=(m-2q)\phi_{\mu_q}$. This decomposition is compatible with the spinor derivative $\nabla^\Sigma$.

\section{Kohn--Dirac and twistor operators} \label{Sec6}

Let $\big(M^{2m+1},H(M),J\big)$, $m\geq 1$, be strictly pseudoconvex. We have
$H(M)\otimes\CC=T_{10}\oplus T_{01}$ and any real transverse vector $X\in H(M)$ can be written as $X=X_{10}+X_{01}$ with
\[
X_{10}= \frac{X-{\rm i}JX}{2}\in T_{10}\qquad \mbox{and}\qquad X_{01}=\frac{X+{\rm i}JX}{2}\in T_{01}.
\]
If $e=(e_1,\dots,e_m)$ denotes a complex orthonormal basis of $(H,J,g_\theta)$, i.e., $s=(e_1,Je_1,\dots,e_m,\allowbreak Je_m)$ is a real orthonormal basis of $(H,g_\theta)$,
we set \[E_\alpha := (e_\alpha)_{10} = \frac{e_\alpha-{\rm i}Je_\alpha}{2},\qquad\alpha=1,\dots,m.\]
The vectors $(E_1,\dots,E_m)$ form an orthogonal basis with respect to the {\em Levi form} on $T_{10}$.
As elements in the complexified Clifford algebra $\CC{\rm l}(H(M))$ we have $E_\alpha E_\alpha=0$ and $E_\alpha\overline{E_\beta}+\overline{E_\beta}E_\alpha=-\delta_{\alpha\beta}$
for any $\alpha,\beta=1,\dots,m$. Moreover,
\begin{equation*}
\sum_{\alpha=1}^m \overline{E_\alpha}E_\alpha = -\frac{1}{2}(m+\Theta),\qquad
\sum_{\alpha=1}^m E_\alpha\overline{E_\alpha} = -\frac{1}{2}(m-\Theta).
\end{equation*}

Now let $\theta$ be a pseudo-Hermitian form on $M$ with ${\rm spin}^\CC$ structure of weight $\ell\in\ZZ$.
The spinorial derivative $\nabla^\Sigma$ on $\Sigma(H(M))$ is induced by the Webster--Tanaka connection.
In the following, we allow covariant derivatives with respect to $Z\in H(M)\otimes\CC$.
This is defined by $\CC$-linear extension and denoted by $\nabla^{\rm tr}_Z$.
This derivative in transverse direction decomposes into
\[
\nabla^{\rm tr} = \nabla_{10} \oplus \nabla_{01},
\]
i.e., for any spinor $\phi\in\Gamma(\Sigma(H(M)))$, we have locally
\[
\nabla_{10}\phi = \sum_{\alpha=1}^m E_\alpha^*\otimes \nabla^{\rm tr}_{E_\alpha}\phi\qquad\mbox{and}\qquad
\nabla_{01}\phi = \sum_{\alpha=1}^m \overline{E_\alpha}^*\otimes \nabla^{\rm tr}_{\overline{E_\alpha}}\phi
\]
with respect to some frame $(E_1,\dots,E_m)$ of $T_{10}$.

Recall that Clifford multiplication is denoted by $c$. Then we can define the first order differential operators
\[
D_-\phi = c(\nabla_{10}\phi)\qquad \mbox{and}\qquad D_+\phi = c(\nabla_{01}\phi)
\]
for spinors $\phi\in\Gamma(\Sigma(H(M)))$. Locally, the two operators are given by
\[
D_-\phi = 2\sum_{\alpha=1}^m \overline{E_\alpha}\cdot\nabla^{\rm tr}_{E_\alpha}\phi\qquad\mbox{and}\qquad
D_+\phi = 2\sum_{\alpha=1}^m E_\alpha\cdot \nabla^{\rm tr}_{\overline{E_\alpha}}\phi.
\]
Note that $\Theta\cdot X_{10}- X_{10}\cdot \Theta = -2X_{10}$ for $X_{10}\in T_{10}$. This shows
\[ T_{10}\cdot \Sigma^{\mu_q} \subseteq \Sigma^{\mu_{q+1}}\qquad\mbox{and}\qquad T_{01}\cdot \Sigma^{\mu_q} \subseteq \Sigma^{\mu_{q-1}}\]
for any $q\in\{0,\dots,m\}$. Hence, the operator $D_+$ maps spinors from $\Gamma\big(\Sigma^{\mu_{q}}\big)$ to $\Gamma\big(\Sigma^{\mu_{q+1}}\big)$.
Similarly, $D_-\colon \Gamma\big(\Sigma^{\mu_{q}}\big)\to\Gamma\big(\Sigma^{\mu_{q-1}}\big)$.
In fact, we have $[\Theta,D_+]=-2D_+$ and $[\Theta,D_-]= 2D_-$.

We compute the square of $D_+$. Locally, around any $p\in M$, we can choose a {\em synchronized frame} of the form $(e_1,\dots,e_m)$
with
\[
\nabla^{\rm W}_{e_\alpha}e_\beta(p) = 0,\qquad \alpha,\beta\in\{1,\dots,m\}.
\]
Then
\begin{align*}
(D_+)^2\phi &=
4\sum_{\alpha,\beta=1}^m E_\alpha E_\beta\nabla^{\rm tr}_{\overline{E_\alpha}}\nabla^{\rm tr}_{\overline{E_\beta}}\phi =
2\sum_{\alpha,\beta} E_\alpha E_\beta\cdot R^{\Sigma}\big(\overline{E_\alpha},\overline{E_\beta}\big)\phi\\
&= -2\bigg( \sum_{\alpha,\beta} E_\alpha\tau(\overline{E_\alpha})E_\beta\overline{E_\beta}+ E_\alpha\overline{E_\alpha}E_\beta\tau\big(\overline{E_\beta}\big) \bigg)\phi = 0,
\end{align*}
where we use (\ref{bianchi}), (\ref{curline}) and the fact that $\tau$, $\tau\circ J$ are trace-free.
Similarly, we obtain $D_-^2=0$. Thus, we have constructed two chain complexes
\begin{equation}\label{spincpx}
0\to \Gamma\big(\Sigma^{\mu_0}\big)\stackrel{D_+}{\longrightarrow}\Gamma\big(\Sigma^{\mu_1}\big)\stackrel{D_+}{\longrightarrow} \cdots\stackrel{D_+}{\longrightarrow}\Gamma\big(\Sigma^{\mu_{m-1}}\big)
\stackrel{D_+}{\longrightarrow}\Gamma\big(\Sigma^{\mu_m}\big)\to 0
\end{equation}
and
\[
0\to \Gamma\big(\Sigma^{\mu_m}\big)\stackrel{D_-}{\longrightarrow} \Gamma\big(\Sigma^{\mu_{m-1}}\big)\stackrel{D_-}{\longrightarrow}\cdots\stackrel{D_-}{\longrightarrow}\Gamma\big(\Sigma^{\mu_{1}}\big)
\stackrel{D_-}{\longrightarrow}\Gamma\big(\Sigma^{\mu_0}\big)\to 0.
\]
From the discussions in Section~\ref{Sec10} it will become clear that these complexes produce finite dimensional cohomology groups.
This compares to the construction of {\em spinorial cohomology} on K\"ahler manifolds as described in~\cite{Mic}.

Next we define
\[
D_\theta\phi = c\big(\nabla_{\cdot}^{\rm tr}\phi\big) = (D_++D_-)\phi.
\]
This \looseness=-1 is a first order, subelliptic differential operator acting on spinor fields $\phi\in\Gamma(\Sigma(H(M))$.
We call $D_\theta$ the {\em Kohn--Dirac operator} to $\theta$ with ${\rm spin}^\CC$ structure of weight~$\ell$ on~$M$ (see~\cite{Pet}; cf.~\cite{LeiN1, Stadt}).
Locally, with respect to an orthonormal frame $(s_1,\dots,s_m)$, the Kohn--Dirac operator is given by
\[
D_\theta\phi = \sum_{i=1}^{2m} s_i\cdot \nabla^{\rm tr}_{s_i}\phi.
\]

Obviously, $D_\theta$ does not preserve the decomposition of spinors with respect to $\Theta$-eigenvalues.
However, we have the identity
\[
D^2_\theta = D_+D_- + D_-D_+,
\]
which shows that the square of the Kohn--Dirac operator maps sections
of $\Sigma^{\mu_q}(H(M))$ to sections of $\Sigma^{\mu_q}(H(M))$ again, i.e.,
\[D_\theta^2\colon \ \Gamma\big(\Sigma^{\mu_q}\big)\to \Gamma\big(\Sigma^{\mu_q}\big),\qquad q=0,\dots,m.\]

On the spinor bundle, we have the $L_2$-inner product defined by
\[
(\phi,\psi) := \int_{M} \langle\phi,\psi\rangle \operatorname{vol}_\theta
\]
for compactly supported spinors $\phi,\psi\in\Gamma_c(\Sigma)$, where
\[
\operatorname{vol}_\theta := \theta\wedge ({\rm d}\theta)^m
\]
denotes the induced volume form of the pseudo-Hermitian structure $\theta$ on~$M$.
The Kohn--Dirac operator $D_\theta$ is formally self-adjoint with respect to this $L_2$-inner product $(\cdot,\cdot)$ on $\Gamma_c(\Sigma)$ (see~\cite{LeiN1}).

Complementary to the Kohn--Dirac operator $D_\theta$, we have {\em twistor operators} $P^{(\mu_q)}$ acting on $\Gamma\big(\Sigma^{\mu_q}(H(M))\big)$ for $q=0,\dots,m$.
In fact, there are orthogonal decompositions
\[
T_{10}^*\otimes \Sigma^{\mu_q} \cong \operatorname{Ker}(c)\oplus \Sigma^{\mu_{q-1}}\qquad\mbox{and}\qquad T_{01}^*\otimes \Sigma^{\mu_q} \cong \operatorname{Ker}(c)\oplus \Sigma^{\mu_{q+1}},
\]
where $\operatorname{Ker}(c)$ denote the corresponding kernels of the Clifford multiplication. Then
with
\[
a_q:= \frac{1}{2(q+1)}\qquad\mbox{and}\qquad b_q:=\frac{1}{2(m-q+1)}
\]
we have for the derivatives $\nabla_{10}\phi_{\mu_q}$ and $\nabla_{01}\phi_{\mu_q}$ of a spinor $\phi_{\mu_q}\in\Gamma\big(\Sigma^{\mu_q}\big)$ the decompositions
\begin{gather*}
\nabla_{10}\phi_{\mu_q} = P_{10}\phi_{\mu_q} - b_q\sum_{\alpha=1}^m E_\alpha^*\otimes E_\alpha\cdot D_-\phi_{\mu_q},\\
\nabla_{01}\phi_{\mu_q} = P_{01}\phi_{\mu_q} - a_q\sum_{\alpha=1}^m \overline{E_\alpha}^*\otimes \overline{E_\alpha}\cdot D_+\phi_{\mu_q},
\end{gather*}
where the twistor operators map to $\operatorname{Ker}(c)$ by
\begin{gather*}
P_{10}(\phi_{\mu_q}) = \sum_{\alpha=1}^m E_\alpha^*\otimes \left( \nabla_{E_\alpha}\phi_{\mu_q} + b_q E_\alpha\cdot D_-\phi_{\mu_q}\right),\\
P_{01}(\phi_{\mu_q}) = \sum_{\alpha=1}^m \overline{E_\alpha}^*\otimes \left( \nabla_{\overline{E_\alpha}}\phi_{\mu_q} +
a_q \overline{E_\alpha}\cdot D_+\phi_{\mu_q}\right),
\end{gather*}
respectively. The sum $P^{(\mu_q)}=P_{10}+P_{01}$ is given with respect to a local orthonormal
frame $s$ by
\[
P^{(\mu_q)}\phi_{\mu_q} = \sum_{i=1}^{2m} s_i^*\otimes\left( \nabla^{\rm tr}_{s_i}\phi_{\mu_q}+a_q\frac{s_i+{\rm i}Js_i}{2}D_+\phi_{\mu_q} +b_q\frac{s_i-{\rm i}Js_i}{2}D_-\phi_{\mu_q} \right).
\]
This is the projection of $\nabla^{\rm tr}\phi_{\mu_q}$ to the kernel $\operatorname{Ker}(c)$.

\section{Covariant components and spinorial CR invariants} \label{Sec7}

In the previous section we have introduced Kohn--Dirac operators $D_\theta$ and twistor operators $P^{(\mu_q)}$ for
${\rm spin}^\CC$ structures of weight $\ell\in\ZZ$. We have only used the Webster--Tanaka connection for their construction.
Now we compute the transformation law for~$D_\theta$ and $P^{(\mu_q)}$ under conformal change of the pseudo-Hermitian structure.
It turns out that certain components of $D_\theta$ and $P^{(\mu_q)}$ are CR invariants.

Let $\theta$ and $\tilde{\theta}= {\rm e}^{2f}\theta$ be two adapted pseudo-Hermitian structures on $\big(M^{2m+1},H(M),J\big)$, $m\geq 1$.
We denote by $\nabla^{\rm W}$ and~$\nabla^\Sigma$ derivatives with respect to~$\theta$.
The derivatives with respect to~$\tilde{\theta}$ are simply denote by $\widetilde{\nabla}$.
Note that the structure group of the Webster--Tanaka connection is~${\rm U}(m)$ for any pseudo-Hermitian form.
We have the transformation rule
\begin{gather}
\widetilde{\nabla}_{X_{10}}Y = \nabla^{\rm W}_{X_{10}}Y + 2X_{10}(f)Y_{10} + 2Y_{10}(f)X_{10} - 2g_\theta(X_{10},Y_{01})\operatorname{grad}_{01}(f),\nonumber\\
\widetilde{\nabla}_{X_{01}}Y = \nabla^{\rm W}_{X_{01}}Y + 2X_{01}(f)Y_{01} + 2Y_{01}(f)X_{01}- 2g_\theta(X_{01},Y_{10})\operatorname{grad}_{10}(f),\label{traflaw1}
\end{gather}
where $X=X_{10}+X_{01}$ and $Y=Y_{10}+Y_{01}$ are transverse vectors (see, e.g.,~\cite{Lee}). The gradient $\operatorname{grad}_\theta(f)\in \Gamma(H(M))$ with complex components
 $\operatorname{grad}_{10}(f)\in\Gamma(T_{10})$ and $\operatorname{grad}_{01}(f)\in\Gamma(T_{01})$
is dual via $g_\theta$ to the restriction of the differential~${\rm d}f$ to~$H(M)$.

Now let $(P,\Lambda)$ be some ${\rm spin}^\CC$ structure of weight $\ell\in\ZZ$ to $\theta$ on $\big(M^{2m+1},H(M),J\big)$.
The canonical bundle $\mathcal{K}$ and all line bundles $\mathcal{E}(p)$, $p\in\ZZ$, are {\em natural} for the underlying
CR structure. In particular, the determinant bundle $L\to M$ of weight~$\ell$ is natural, and the corresponding principal ${\rm U}(1)$-bundles~$P_1$ and~$\tilde{P}_1$
of frames in~$L$ with respect to $\theta$ and $\tilde{\theta}$, respectively, are naturally identified.
The same is true for the orthonormal frames in~$H(M)$ to $\theta$ and $\tilde{\theta}$.
Thus, there exists a unique ${\rm spin}^\CC$ structure $\big(\tilde{P},\tilde{\Lambda}\big)$ with respect to~$\tilde{\theta}$
on~$M$, whose {\em spinor frames} are naturally identified with those of~$(P,\Lambda)$.
Of course, the determinant bundle to $\big(\tilde{P},\tilde{\Lambda}\big)$ has weight~$\ell$ again, and there exists
an unitary isomorphism
\begin{align*}
\Sigma(H(M))&\cong \widetilde{\Sigma}(H(M)),\\
\phi&\mapsto \tilde{\phi},
\end{align*}
between the two kinds of spinor bundles such that
$X\cdot\phi$ is sent to ${\rm e}^{-f}X \tilde{\cdot} \tilde{\phi}$ for any transverse vector $X\in H(M)$ and spinor $\phi\in \Sigma(H(M))$.
Also note that $\widetilde{\Theta}\tilde{\phi}=(\Theta\phi)^{\widetilde{}}$, i.e., the decomposition $\Sigma(H(M))=\oplus_{q=0}^m \Sigma^{\mu_q}(H(M))$
into $\Theta$-eigenspaces is CR-invariant.

We compare now the spinor derivatives with respect to $\theta$ and $\tilde{\theta}$, respectively.
First, let
$\sigma= E_1\wedge\dots\wedge E_m$ be a local section in~$\mathcal{K}^{-1}\to M$
and $\tilde{\sigma}= \widetilde{E_1}\wedge\dots\wedge \widetilde{E_m}$ the corresponding section with respect to~$\tilde{\theta}$.
Then $\tilde{\sigma}={\rm e}^{-mf}\sigma$ and with~(\ref{traflaw1}) we obtain the transformation rule
\begin{gather*}
\widetilde{\nabla}_{X_{10}}\tilde{\sigma} = \left(A^\sigma(X_{10}) + (m+2)X_{10}(f) \right) \tilde{\sigma} = A^{\tilde{\sigma}}(X_{10})\tilde{\sigma},\\
\widetilde{\nabla}_{X_{01}}\tilde{\sigma} = \left( A^\sigma(X_{01}) - (m+2)X_{01}(f) \right) \tilde{\sigma} = A^{\tilde{\sigma}}(X_{01})\tilde{\sigma},
\end{gather*}
$X=X_{10}+X_{01}$, for the local connections forms of $\mathcal{K}^{-1}$. This gives
\[ A^{\tilde{\sigma}}(X)- A^{\sigma}(X)=-{\rm i}(m+2) (JX)(f), \qquad X\in H(M).\]
Accordingly,
\[
A^{\tilde{\sigma}}(X)- A^{\sigma}(X) = -{\rm i}\ell (JX)(f),\qquad X\in H(M),
\]
 for the local connection forms on $L=\mathcal{E}(\ell)$.
With formulas in~\cite{LeiN1} we obtain for spinors~$\phi$ the transformation rule
\begin{gather*}
\widetilde{\nabla}_{X_{10}}\tilde{\phi} = \widetilde{\nabla_{X_{10}}^{\Sigma}\phi}-(X_{10}\cdot \operatorname{grad}_{01}(f)\cdot \phi)^{\widetilde{}}
+\frac{\ell-2}{2}X_{10}(f)\tilde{\phi}
-\frac{1}{2}X_{10}(f)(\Theta\phi)^{\widetilde{}},
\\
\widetilde{\nabla}_{X_{01}}\tilde{\phi} = \widetilde{\nabla_{X_{01}}^{\Sigma}\phi}-(X_{01}\cdot \operatorname{grad}_{10}(f)\cdot \phi)^{\widetilde{}}
-\frac{\ell+2}{2}X_{01}(f)\tilde{\phi}
+\frac{1}{2}X_{01}(f)(\Theta\phi)^{\widetilde{}}.
\end{gather*}
This gives for $\phi_{\mu_q}\in\Gamma(\Sigma^{\mu_q})$, $q\in\{0,\dots,m\}$,
\begin{gather*}
\tilde{D}_-\tilde{\phi}_{\mu_q} = {\rm e}^{-f}\left( D_-\phi_{\mu_q}
+ \left(m+1+\frac{\mu_q+\ell}{2}\right) \operatorname{grad}_{01}(f) \phi_{\mu_q} \right)^{\widetilde{}},
\\
\tilde{D}_+\tilde{\phi}_{\mu_q} = {\rm e}^{-f}\left( D_+\phi_{\mu_q}
+ \left(m+1-\frac{\mu_q+\ell}{2}\right) \operatorname{grad}_{10}(f) \phi_{\mu_q} \right)^{\widetilde{}},
\end{gather*}
and we obtain
\begin{gather*}
\tilde{D}_- \big({\rm e}^{-v_- f}\tilde{\phi}_{\mu_q}\big) = {\rm e}^{-(v_-+1)f} \widetilde{D_-\phi_{\mu_q}}\qquad
\mbox{for}\quad v_-=m+1+\frac{\mu_{q}+\ell}{2},\\
\tilde{D}_+ \big({\rm e}^{-v_+ f}\tilde{\phi}_{\mu_q}\big) = {\rm e}^{-(v_++1)f} \widetilde{D_+\phi_{\mu_q}}\qquad
\mbox{for}\quad v_+=m+1-\frac{\mu_{q}+\ell}{2}.
\end{gather*}
Hence, for the $\Theta$-eigenvalue $\mu_q=-\ell$, we have
\begin{equation}\label{compare}
D_{\tilde{\theta}}\big({\rm e}^{-(m+1)f}\tilde{\phi}_{-\ell}\big) = {\rm e}^{-(m+2)f} \widetilde{D_\theta\phi_{-\ell}},
\end{equation}
i.e., the restriction of the Kohn--Dirac operator $D_\theta$ of weight $\ell$ to $\Gamma\big(\Sigma^{-\ell}\big)$ acts CR-covariantly.

Recall that the given ${\rm spin}^\CC$ structure of weight $\ell\in\ZZ$ on $M$ is determined by some complex line bundle $E(\alpha)$, $\alpha\in H^2(M,\ZZ)$.

\begin{Definition} Let $\big(M^{2m+1},H(M),J\big)$, $m\geq 1$, be strictly pseudoconvex
with pseudo-Her\-mi\-tian form~$\theta$ and ${\rm spin}^\CC$ structure of weight $\ell\in\ZZ$.\samepage
\begin{enumerate}\itemsep=0pt
\item[(a)] A spinor $\phi\in\Gamma(\Sigma(H(M))$ in the kernel of the Kohn--Dirac operator, i.e., $D_\theta\phi=0$,
is called \emph{harmonic}. We denote by $\mathcal{H}^{q}(\alpha)$ the space of harmonic spinors
with $\Theta$-eigenvalue~$\mu_q$, $q\in\{0,\dots,m\}$. Its dimension is denoted $h_{q}(\alpha)$.
\item[(b)] For weight $\ell\in\{-m,-m+2,\dots,m-2,m\}$ the differential operator
\[
\mathcal{D}_\ell\colon \ \Gamma\big(\Sigma^{-\ell}\big) \to \Gamma\big(\Sigma^{-\ell+2}\big)\oplus\Gamma\big(\Sigma^{-\ell-2}\big)
\]
denotes the restriction of $D_\theta$ (of weight $\ell$) to spinors of $\Theta$-eigenvalue $-\ell$.
We call $\mathcal{D}_\ell$ the $\ell$th (CR-covariant) component of the Kohn--Dirac operator.
\item[(c)] A spinor $\phi$ in the kernel of $\mathcal{D}_\ell$ is called \emph{harmonic of weight~$\ell$}.
\end{enumerate}
\end{Definition}

Since $\mathcal{D}_\ell$ acts by (\ref{compare}) CR-covariantly,
harmonic spinors of weight $\ell$ are CR invariants of $(H(M),J)$ on~$M$.
The dimension $h_{\frac{m+\ell}{2}}(\alpha)$ is a CR invariant as well.
In Section~\ref{Sec10} we will see that in fact all dimensions $h_{q}(\alpha)$, $0<q<m$, are CR-invariant numbers.

Let us consider the twistor operators $P_{10}$ and $P_{01}$. Calculating as above we find
\[\widetilde{\nabla}_{X_{10}}\big({\rm e}^{-w_-f}\tilde{\phi}_{\mu_q}\big)+b_qX_{10}\tilde{D}_-\big({\rm e}^{-w_-f}\tilde{\phi}_{\mu_q}\big) =
{\rm e}^{-w_-f}\cdot\big(\nabla_{X_{10}}\phi_{\mu_q}+b_qX_{10}D_-\phi_{\mu_q}\big)^{\widetilde{}}\]
exactly for $w_-=\frac{\ell-\mu_q}{2}-1$, and
\[\widetilde{\nabla}_{X_{01}}\big({\rm e}^{-w_+f}\tilde{\phi}_{\mu_q}\big)+a_qX_{01}\tilde{D}_+\big({\rm e}^{-w_+f}\tilde{\phi}_{\mu_q}\big) =
{\rm e}^{-w_+f}\cdot\big(\nabla_{X_{01}}\phi_{\mu_q}+a_qX_{01}D_-\phi_{\mu_q}\big)^{\widetilde{}}\]
exactly for $w_+=\frac{\mu_q-\ell}{2}-1$.
Hence, the twistor operator $P^{(\mu_q)}=P_{10}+P_{01}$ is CR-covariant for the $\Theta$-eigenvalue $\mu_q=\ell$.

\begin{Definition} Let $\big(M^{2m+1},H(M),J\big)$, $m\geq 1$, be strictly pseudoconvex
with pseudo-Her\-mi\-tian form $\theta$ and ${\rm spin}^\CC$ structure of weight $\ell\in\ZZ$.
\begin{enumerate}\itemsep=0pt
\item[(a)] For weight $\ell\in\{-m,-m+2,\dots,m-2,m\}$
the differential operator
\[
\mathcal{P}_\ell\colon \ \Gamma\big(\Sigma^{\ell}\big) \to \Gamma(\operatorname{Ker}(c)) \subset \Gamma\big(H(M)\otimes \Sigma^{\ell}\big)
\]
denotes the $\ell$th component of the twistor operator to $\theta$.

\item[(b)] A (non-trivial) element in the kernel of $\mathcal{P}_\ell$ is called \emph{CR twistor spinor of weight~$\ell$}.
The dimension $p_\ell$ of $\operatorname{Ker}(\mathcal{P}_\ell)$ denotes a CR invariant.
\end{enumerate}
\end{Definition}

In the non-extremal cases, i.e., for $\ell\neq \pm m$, the twistor equation is overdetermined.
In fact, similar as in~\cite{Pil}, we suppose the existence of a~{\em twistor connection} such that CR twistor spinors correspond to parallel sections in certain {\em twistor bundles}. This would
imply that for $\ell\neq \pm m$ the CR invariants $p_\ell$ are numbers.

\begin{Example} Parallel spinors of weight $\ell\in\{-m,-m+2,\dots,m-2,m\}$ are CR twistor spinors. For the spin case ($\ell=0$)
we discuss parallel spinors to any eigenvalue $\mu_q$ in~\cite{LeiN2}. They occur on pseudo-Einstein spin manifolds. In Section \ref{Sec12} we
demonstrate the construction of closed CR manifolds admitting parallel spinors with $\ell=\mu_q=0$, i.e., CR twistor spinors.
\end{Example}

\begin{Example}
In \cite{LeiN1} we describe {\em pseudo-Hermitian Killing spinors} for the case of a spin structure on $M$.
These spinors are in the kernel of the twistor operator and realize a certain lower bound for the non-zero eigenvalues of the Kohn--Dirac operator~$D_\theta$.
In particular, we find CR twistor spinors of weight $\ell=0$ on the standard spheres $S^{2m+1}$ of even CR dimension $m\geq 2$.

We also find Killing spinors in case that $\theta$
is related to some {\em $3$-Sasakian structure} on $M$. However, in this situation the CR dimension $m$ is odd and
$\ell=0$ is impossible. Such Killing spinors are not CR twistor spinors.
\end{Example}

\section{Vanishing theorems for harmonic spinors} \label{Sec8}

Let $\big(M^{2m+1}, H(M), J\big)$, $m\geq 1$, be strictly pseudoconvex with pseudo-Hermitian structure $\theta$ and Kohn--Dirac operator~$D_\theta$ to some ${\rm spin}^\CC$ structure of weight $\ell\in\ZZ$.
The operator~$D_\theta$ is formally
self-adjoint and there exists a Schr\"odinger--Lichnerowicz-type formula (see~\cite{Pet}; cf.~\cite{LeiN1}).
We use this formula to derive {\em vanishing theorems} for harmonic spinors.

Let $\nabla^{\rm tr}$ denote the transversal part of the Webster--Tanaka spinorial derivative to a chosen
${\rm spin}^\CC$ structure of weight $\ell\in\ZZ$.
Then $\Delta^{\rm tr}=-\operatorname{tr}_{\theta} \big(\nabla^{\rm tr}\circ\nabla^{\rm tr}\big)$ denotes the {\em spinor sub-Laplacian}, and
we have
\[
\Delta^{\rm tr} = \nabla^*_{10}\nabla_{10} + \nabla^*_{01}\nabla_{01},
\]
where $\nabla^*_{10}$ and $\nabla^*_{01}$ are the formal adjoint to $\nabla_{10}$ and $\nabla_{01}$, respectively.
As in~\cite{LeiN1} we obtain with~(\ref{curline}) the equation
\begin{equation*}
D_\theta^2\phi = \Delta^{\rm tr}\phi - \frac{{\rm i}\ell}{2(m+2)}\rho_\theta\phi + \frac{1}{4}{\rm scal}^{\rm W}\cdot\phi - {\rm d}\theta\nabla^{\Sigma}_T\phi
\end{equation*}
for the square of the Kohn--Dirac operator. For the spinorial derivative in characteristic direction we compute
\begin{gather*}
\nabla^\Sigma_T\phi = \frac{{\rm i}}{4m}\left( 2\nabla^*_{10}\nabla_{10}-2\nabla^*_{01}\nabla_{01} + {\rm i}\rho_\theta - \frac{\ell \operatorname{scal}^{\rm W}}{2(m+2)}
\right)(\phi).
\end{gather*}
This results in the Schr\"odinger--Lichnerowicz-type formula (cf.~\cite{LeiN1, Pet})
\begin{gather}\nonumber
D_\theta^2 = \left( \left(1-\frac{\Theta}{m}\right) \nabla^*_{10}\nabla_{10} + \left(1+\frac{\Theta}{m}\right) \nabla^*_{01}\nabla_{01}\right)\\
\hphantom{D_\theta^2 =}{} - \frac{{\rm i}}{2}\left( \frac{\ell}{m+2}+\frac{\Theta}{m}\right)\rho_\theta + \left(1+\frac{\ell\Theta}{m(m+2)}\right)\frac{{\rm scal}^{\rm W}}{4}.\label{formtheo}
\end{gather}

The curvature part in~(\ref{formtheo}) acts by Clifford multiplication as a self-adjoint operator on spinors. In case that this operator is positive definite, at each point
of some closed manifold~$M$,
we immediately obtain vanishing results for harmonic spinors in the non-extremal bundles $\Sigma^{\mu_q}$ ($\mu_q\neq \pm m$).
We aim to specify the situation.
Let us call the pseudo-Hermitian Ricci form $\rho_\theta$ {\em positive (resp.\ negative) semidefinite} if all eigenvalues are nonnegative (resp. nonpositive) on~$M$.
We can state our basic vanishing result as follows.

\begin{Proposition} \label{vani} Let $\theta$ be some pseudo-Hermitian structure on a closed CR manifold $M^{2m+1}$,
$m\geq 1$, with ${\rm spin}^\CC$ structure of weight $\ell\in \ZZ$.
\begin{enumerate}\itemsep=0pt
\item[$(a)$] A non-extremal bundle $\Sigma^{\mu_q}$ allows no harmonic spinors under the following conditions:
\begin{enumerate}\itemsep=0pt
\item[$(1)$] $\mu_q=-\frac{m\ell}{m+2}$ and ${\rm scal}^{\rm W}\geq 0$ on $M$ with ${\rm scal}^{\rm W}(p)>0$ at some point $p\in M$.
\item[$(2)$] $\mu_q>-\frac{m\ell}{m+2}$, $\rho_\theta$ semidefinite on~$M$ and $(m+2-\ell){\rm scal}^{\rm W}>0$ at some point $p\in M$.
\item[$(3)$] $\mu_q<-\frac{m\ell}{m+2}$, $\rho_\theta$ semidefinite on~$M$ and $(m+2+\ell){\rm scal}^{\rm W}>0$ at some point $p\in M$.
\end{enumerate}
\item[$(b)$] If $|\ell|>m+2$ and $\rho_\theta\not\equiv 0$ is negative semidefinite then
any harmonic spinor is a section of the extremal bundles $\Sigma^m\oplus \Sigma^{-m}$.
\item[$(c)$] If $|\ell|<m+2$ and $\rho_\theta\not\equiv 0$ is positive semidefinite then
any harmonic spinor is a section of the extremal bundles $\Sigma^m\oplus \Sigma^{-m}$.
\end{enumerate}
\end{Proposition}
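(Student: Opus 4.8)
The plan is to run a Bochner-type argument off the Schr\"odinger--Lichnerowicz formula \eqref{formtheo}. First I restrict \eqref{formtheo} to $\Gamma\big(\Sigma^{\mu_q}\big)$, where $\Theta$ acts by the scalar $\mu_q=m-2q$, so that there
\[
D_\theta^2 = \Bigl(1-\tfrac{\mu_q}{m}\Bigr)\nabla^*_{10}\nabla_{10} + \Bigl(1+\tfrac{\mu_q}{m}\Bigr)\nabla^*_{01}\nabla_{01} + \mathcal{R}_q,\qquad
\mathcal{R}_q := -\tfrac{{\rm i}}{2}\Bigl(\tfrac{\ell}{m+2}+\tfrac{\mu_q}{m}\Bigr)\rho_\theta + \Bigl(1+\tfrac{\ell\mu_q}{m(m+2)}\Bigr)\tfrac{{\rm scal}^{\rm W}}{4},
\]
the first summand of $\mathcal{R}_q$ being a scalar times the Clifford action of $\rho_\theta$. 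For a non-extremal $\mu_q$ the coefficients $1-\mu_q/m$ and $1+\mu_q/m$ equal $2q/m$ and $2(m-q)/m$, both strictly positive. On the closed manifold $M$ the formally self-adjoint $D_\theta$ makes a harmonic spinor $\phi\in\Gamma\big(\Sigma^{\mu_q}\big)$ (necessarily with $D_+\phi=D_-\phi=0$) satisfy $\big(D_\theta^2\phi,\phi\big)=0$, hence
\[
0 = \Bigl(1-\tfrac{\mu_q}{m}\Bigr)\|\nabla_{10}\phi\|^2 + \Bigl(1+\tfrac{\mu_q}{m}\Bigr)\|\nabla_{01}\phi\|^2 + \int_M\langle\mathcal{R}_q\phi,\phi\rangle\operatorname{vol}_\theta ,
\]
with the first two terms $\ge 0$. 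So everything reduces to showing that the self-adjoint endomorphism $\mathcal{R}_q$ is $\ge 0$ on $M$ and $>0$ on some non-empty open set: then all three terms vanish, so $\nabla^{\rm tr}\phi\equiv 0$ and $\phi$ vanishes on an open set; since $H(M)$ is bracket-generating, $\nabla^{\rm tr}\phi\equiv 0$ forces $|\phi|$ to be constant, whence $\phi\equiv 0$.

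The heart of the matter is the sign of $\mathcal{R}_q$. Case $(a)(1)$ is immediate: for $\mu_q=-\tfrac{m\ell}{m+2}$ the coefficient of $\rho_\theta$ is $0$, leaving $\mathcal{R}_q=\big(1-\tfrac{\ell^2}{(m+2)^2}\big)\tfrac{{\rm scal}^{\rm W}}{4}$, and non-extremality of $\mu_q$ forces $|\ell|<m+2$, so the factor is positive and the hypotheses on ${\rm scal}^{\rm W}$ finish it. For $(a)(2)$ and $(a)(3)$ I diagonalise the $J$-invariant form $\rho_\theta$: its Clifford action on $\Sigma^{\mu_q}$ has eigenvalues $\sum_{j\notin S}\lambda_j-\sum_{j\in S}\lambda_j$ over $q$-subsets $S$, where the $\lambda_j$ are the pseudo-Hermitian Ricci eigenvalues and ${\rm scal}^{\rm W}$ is a fixed positive multiple of $\sum_j\lambda_j$. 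When $\rho_\theta$ is semidefinite this yields the pointwise bound $\big|\tfrac{{\rm i}}{2}\rho_\theta\big|\le\tfrac14|{\rm scal}^{\rm W}|$ on $\Sigma^{\mu_q}$; inserting it, and noting that $\tfrac{\ell}{m+2}+\tfrac{\mu_q}{m}$ is positive in case $(2)$ and negative in case $(3)$, gives $\mathcal{R}_q\ge\big(1\mp\tfrac{\mu_q}{m}\big)\tfrac{m+2\mp\ell}{m+2}\cdot\tfrac{{\rm scal}^{\rm W}}{4}$ (upper signs in case $(2)$, lower in case $(3)$), which is $\ge 0$, and $>0$ at $p$, precisely under the stated sign condition on $(m+2\mp\ell)\,{\rm scal}^{\rm W}$. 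This crude bound suffices whenever $\rho_\theta$ and $m+2\mp\ell$ have matching signs; in the remaining ``opposed'' regime (in particular whenever $|\ell|>m+2$) it does not, and there I expect one must additionally bring in the auxiliary identity for $\nabla^\Sigma_T$ recorded before \eqref{formtheo} to re-express $\nabla^*_{10}\nabla_{10}-\nabla^*_{01}\nabla_{01}$ through $\rho_\theta$, ${\rm scal}^{\rm W}$ and $\nabla^\Sigma_T$, and then combine it with $\operatorname{Re}\big(\nabla^\Sigma_T\phi,\phi\big)=0$ and $\|\nabla^\Sigma_T\phi\|^2\ge 0$ (valid because $\mathfrak{L}_T\operatorname{vol}_\theta=0$ on the closed $M$) so as to absorb the curvature deficit into the subelliptic connection Laplacians --- this is exactly where $|\ell|>m+2$ enters.

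Parts $(b)$ and $(c)$ then follow by applying $(a)$ to every non-extremal $\mu_q$. If $|\ell|>m+2$ and $\rho_\theta\not\equiv 0$ is negative semidefinite then ${\rm scal}^{\rm W}\le 0$, with ${\rm scal}^{\rm W}<0$ on a non-empty open set; for $\ell>m+2$ every non-extremal $\mu_q$ satisfies $\mu_q>-\tfrac{m\ell}{m+2}$ and $(m+2-\ell)\,{\rm scal}^{\rm W}\ge 0$, strictly where $\rho_\theta\neq 0$, so $(a)(2)$ applies, and the case $\ell<-(m+2)$ is symmetric via $(a)(3)$. Part $(c)$ is the same bookkeeping with $\rho_\theta\ge 0$, ${\rm scal}^{\rm W}\ge 0$ and $|\ell|<m+2$, where only the matching-sign sub-cases of $(a)(2)$, $(a)(3)$ and case $(a)(1)$ are needed. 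I expect the genuine obstacle to be the curvature estimate in the opposed regime: there the Clifford action of $\rho_\theta$ on $\Sigma^{\mu_q}$ is truly indefinite, so one cannot rely on pointwise positivity of $\mathcal{R}_q$ alone and must play the subelliptic terms and the $\nabla^\Sigma_T$-identity against the curvature.
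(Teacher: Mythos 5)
Your argument follows the same route as the paper: restrict the Schr\"odinger--Lichnerowicz formula (\ref{formtheo}) to $\Gamma\big(\Sigma^{\mu_q}\big)$, integrate against a harmonic spinor, and reduce everything to the sign of the curvature endomorphism $Q^q$ of (\ref{KTerm}) (your $\mathcal{R}_q$). Your treatment of (a)(1), of the sub-cases of (a)(2)--(3) in which $\rho_\theta$ is positive semidefinite, and hence of all of part (c), is complete and correct, and your final step (from $\int\langle\mathcal{R}_q\phi,\phi\rangle=0$ and $\nabla^{\rm tr}\phi\equiv0$ to $\phi\equiv0$ via bracket generation) is spelled out more carefully than in the paper.

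The gap is the one you flag yourself and do not close: the negative-semidefinite sub-cases of (a)(2)--(3), equivalently the regime $|\ell|>m+2$, and therefore all of part (b). Your suspicion that no pointwise positivity is available there is correct. Diagonalizing $\rho_\theta$ with eigenvalues $\lambda_1,\dots,\lambda_m\le0$, the action of $Q^q$ on the eigenspinor labelled by $\epsilon\in\{\pm1\}^m$ is a positive multiple of $(d-c)\sum_{\epsilon_j=+1}\lambda_j+(d+c)\sum_{\epsilon_j=-1}\lambda_j$, where $c=\frac{\ell}{m+2}+\frac{\mu_q}{m}$ and $d=1+\frac{\ell\mu_q}{m(m+2)}$, so that $d-c$ and $d+c$ are positive multiples of $m+2-\ell$ and $m+2+\ell$ on a non-extremal $\Sigma^{\mu_q}$; for $\ell>m+2$ these have opposite signs, and a rank-one negative $\rho_\theta$ already gives $Q^q$ a strictly negative eigenvalue. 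So no pointwise Bochner estimate can finish these cases, and the auxiliary $\nabla^\Sigma_T$-identity you invoke remains a proposal, not a proof. Be aware that the paper's own proof does not supply the missing step either: its two displayed lower bounds for $A(\phi)$ follow from $\big|\big\langle\frac{{\rm i}}{2}\rho_\theta\phi,\phi\big\rangle\big|\le\frac{{\rm scal}^{\rm W}}{4}|\phi|^2$ only when ${\rm scal}^{\rm W}\ge0$; for $\rho_\theta$ negative semidefinite and $c\ge0$ the same estimate produces the coefficient $(m+\mu_q)(m+2+\ell)$ rather than $(m-\mu_q)(m+2-\ell)$, which has the wrong sign when $\ell>m+2$. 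Closing part (b) would require a genuinely finer Weitzenb\"ock identity (for instance Tanaka's comparison of $\square$ and $\overline{\square}$ quoted in Section~\ref{Sec11}), or a restriction of the statement to the positive-semidefinite regime.
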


\begin{proof} Let $\phi=\phi_{\mu_q}\not\equiv 0$ be a spinor with $\Theta\phi=(m-2q)\phi$. We put
\begin{gather} \label{KTerm}
Q^q = - \frac{{\rm i}}{2}\left( \frac{\ell}{m+2}+\frac{\mu_q}{m}\right)\rho_\theta + \left(1+\frac{\ell\mu_q}{m(m+2)}\right)\frac{{\rm scal}^{\rm W}}{4}
\end{gather}
and \[A(\phi) = \int_M \big\langle Q^q\phi,\phi\big\rangle \operatorname{vol}_\theta.\]
The Schr\"odinger--Lichnerowicz-type formula (\ref{formtheo}) gives
\begin{equation}\label{sc2}
\|D_\theta\phi\|^2 = \frac{2q}{m}\|\nabla_{10}\phi\|^2 + \frac{2(m-q)}{m}\|\nabla_{01}\phi\|^2 + A(\phi).
\end{equation}

If the eigenvalues of $\rho_\theta$ have no different signs, we have by Cauchy--Schwarz inequality
\[
\left|\left\langle \frac{{\rm i}\rho_\theta}{2}\cdot\phi,\phi\right\rangle\right| \leq \frac{{\rm scal}^{\rm W}}{4}|\phi|^2.
\]
Hence,
\[
A(\phi) \geq \frac{(m-\mu_q)(m+2-\ell)}{m(m+2)} \int_M \frac{{\rm scal}^{\rm W}}{4}|\phi|^2\operatorname{vol}_\theta
\]
for $m\ell+(m+2)\mu_q\geq 0$, and
\[
A(\phi) \geq \frac{(m+\mu_q)(m+2+\ell)}{m(m+2)} \int_M \frac{{\rm scal}^{\rm W}}{4}|\phi|^2\operatorname{vol}_\theta
\]
for $m\ell+(m+2)\mu_q\leq 0$.

We assume that $\mu_q\neq \pm m$ is non-extremal. Then $A(\phi)$ is obviously positive for the three cases of part~(a) of the proposition.
In particular, the right hand side of~(\ref{sc2}) is always positive. This shows that no harmonic spinors exist in these three cases.

If $|\ell|>m+2$ and $\rho_\theta\not\equiv 0$ is negative semidefinite then either condition (2) or (3) of part (a) is satisfied
for any non-extremal $\Theta$-eigenvalue $\mu_q\neq \pm m$. If
$|\ell|<m+2$ and $\rho_\theta\not\equiv 0$ is positive semidefinite, one of the three conditions of part (a) is satisfied
for any non-extremal $\Theta$-eigenvalue $\mu_q\neq \pm m$.
\end{proof}

Note that we have no vanishing results for harmonic spinors in the extremal bundles~$\Sigma^{-m}$ or~$\Sigma^{m}$. Such spinors
are simply {\em holomorphic} or {\em antiholomorphic}, respectively. So far we also have no vanishing results for the canonical and anticanonical ${\rm spin}^\CC$ structures when $\ell=\pm (m+2)$. However, there are vanishing results in these cases (see~\cite{Tan} and Section~\ref{Sec10}).

\begin{Example}
Any pseudo-Einstein spin manifold $M$ (i.e., $\ell=0$) admits parallel spinors in the extremal bundles, no matter of the sign of the
Webster scalar curvature (see~\cite{LeiN2}). For ${\rm scal}^{\rm W}>0$ these are the only harmonic spinors on closed~$M$.
\end{Example}

Let us consider the CR-covariant components $\mathcal{D}_\ell$ of the Kohn--Dirac operator.
The formal adjoint of $\mathcal{D}_\ell$, $\ell\in\{-m,-m+2,\dots,m-2,m\}$, is the restriction of $D_\theta$ to the image of $\mathcal{D}_\ell$, i.e.,
\[
\mathcal{D}_\ell^*=D_\theta\colon \ \operatorname{Im}(\mathcal{D}_\ell) \to \Gamma\big(\Sigma^{-\ell}\big).
\]
Then the Schr\"odinger--Lichnerowicz-type formula~(\ref{formtheo}) for $\mathcal{D}_\ell$
is expressed by
\begin{gather}
\mathcal{D}_\ell^*\mathcal{D}_\ell = \frac{m+\ell}{m}\nabla^*_{10}\nabla_{10} + \frac{m-\ell}{m}\nabla^*_{01}\nabla_{01}
 + \frac{{\rm i}\ell\rho_\theta}{m(m+2)}
+\left(1-\frac{\ell^2}{m(m+2)}\right)\frac{{\rm scal}^{\rm W}}{4}.\label{SLformula}
\end{gather}
Especially, for $\ell=0$, we have
\[
\mathcal{D}_0^*\mathcal{D}_0 = \Delta^{\rm tr} + \frac{{\rm scal}^{\rm W}}{4}.
\]
The latter formula {\em looks} like the classical Schr\"odinger--Lichnerowicz formula of Riemannian geometry.
This immediately shows that in the spin case $h_{\frac{m}{2}}(\alpha)>0$ (i.e., a harmonic spinors of weight $\ell=0$ exists)
poses an obstruction to the existence of any adapted pseudo-Hermitian structure on the CR manifold $(M,H(M),J)$ with positive Webster scalar ${\rm scal}^{\rm W}>0$.
We give a more general version of this statement in terms of Kohn--Rossi cohomology in Corollary~\ref{ObsKR}.
Similarly, (\ref{SLformula}) implies that the CR invariants $h_{\frac{m+\ell}{2}}(\alpha)>0$, $|\ell|< m$, are obstructions
to the positivity of the Ricci form $\rho_\theta>0$.

\begin{Example} In Section~\ref{Sec12} we construct closed CR manifolds over {\em hyperK\"ahler manifolds} which admit harmonic spinors
of weight $\ell=0$. Such CR manifolds admit no adapted pseudo-Hermitian structure~$\theta$ of positive Webster scalar curvature.
\end{Example}
\begin{Example}
 There exist compact quotients of the {\em Heisenberg group}, which are strictly pseudoconvex and spin with harmonic spinors of weight $\ell=0$ (see~\cite{Stadt}).
\end{Example}

\section{Harmonic theory for the Kohn--Rossi complex} \label{Sec9}

We briefly review here the {\em Kohn--Rossi complex} \cite{KR} over
CR manifolds, twisted with some CR vector bundle $E$.
With respect to a pseudo-Hermitian form we construct the {\em Kohn Laplacian}~$\square_E$. Even though
$\square_E$ is not an elliptic operator,
there is a well behaving {\em harmonic theory}, similar to {\em Hodge theory}.
In particular, {\em Kohn--Rossi cohomology groups} are finite
and cohomology classes admit unique harmonic representatives over closed manifolds. This theory is due to J.J.~Kohn (see \cite{FK, Koh}). Our exposition of the topic follows~\cite{Tan} by N.~Tanaka.

Let $\big(M^{2m+1}, H(M), J\big)$ be a closed manifold equipped with a strictly pseudoconvex CR structure of hypersurface type and CR dimension $m\geq 1$.
With respect to the complex structure $J$ we have the decomposition $H(M)\otimes \CC=T_{10}\oplus T_{01}$
of the Levi distribution.
We define complex differential forms of degree $(p,q)$ on $H(M)$ by
\[
\Lambda^{p,q}(H(M)) := \Lambda^p T_{10}^* \otimes \Lambda^q T_{01}^*.
\]
Then
\[
\Lambda^r(H(M))\otimes\CC = \bigoplus_{p+q=r} \Lambda^{p,q}(H(M)).
\]
(Note that $(p,q)$-forms on $H(M)$ are not complex differentials form on $M$.)

We are interested in the bundles $\Lambda^{0,q}(H(M))=\Lambda^q T_{01}^*$ of $(0,q)$-forms.
The corresponding spaces of smooth sections over $M$ are denoted by $\mathcal{C}^q(M)$, $q=0,\dots, m$.
There exist {\em tangential Cauchy--Riemann operators}
\[
\bar{\partial}_b\colon \ \mathcal{C}^q(M) \to \mathcal{C}^{q+1}(M),\qquad q\in\{0,\dots,m\}.
\]
These differential operators are by construction CR invariants and
the sequence
\[
0\longrightarrow \mathcal{C}^0(M) \stackrel{\bar{\partial}_b}{\longrightarrow} \mathcal{C}^1(M) \stackrel{\bar{\partial}_b}{\longrightarrow} \cdots
 \stackrel{\bar{\partial}_b}{\longrightarrow} \mathcal{C}^{m-1}(M) \stackrel{\bar{\partial}_b}{\longrightarrow} \mathcal{C}^m(M)\longrightarrow 0
\]
is called {\em Kohn--Rossi complex}. Its cohomology groups are denoted by $H^{0,q}(M)$, $q\geq 0$.

More generally, let us consider a complex vector bundle~$E$ over~$M$. We assume that $E$ is equipped with some Cauchy--Riemann operator
\[ \bar{\partial}_E\colon \ \Gamma(E) \to \Gamma(E\otimes T_{01}^*),\]
i.e., $\bar{\partial}_E$ satisfies
\begin{gather*} \bar{\partial}_E(fu)(X_{01})=X_{01}(f)\cdot u+ f\cdot \big(\bar{\partial}_Eu\big)(X_{01}),\\
(\bar{\partial}_Eu)([X_{01},Y_{01}])=\bar{\partial}_E\big(\bar{\partial}_Eu(X_{01})\big)(Y_{01})- \bar{\partial}_E\big(\bar{\partial}_Eu(Y_{01})\big)(X_{01})
\end{gather*}
for any smooth $\CC$-valued function $f$ and sections $X_{01}$, $Y_{01}$ in $T_{01}$. We call $\big(E,\bar{\partial}_E\big)$ a {\em CR vector bundle} over $M$. Smooth sections $u$ of $E$ with $\bar{\partial}_Eu=0$ are {\em holomorphic sections} (see~\cite{Tan}).

Furthermore, for some CR vector bundle $E$ over $M$, we set $C^q(M,E)=\Lambda^qT_{01}^*\otimes E$ and
$\mathcal{C}^q(M,E)=\Gamma(C^q(M,E))$ for smooth sections. The {\em holomorphic structure} $\bar{\partial}_E$ on $E$ extends to Cauchy--Riemann operators
\[ \bar{\partial}_E\colon \ \mathcal{C}^q(M,E) \to \mathcal{C}^{q+1}(M,E)\]
for any $q\in\{0,\dots,m\}$. This is by construction a twisted complex $\big( \mathcal{C}^q(M,E), \bar{\partial}_E\big)$, and
we denote the corresponding cohomology groups by $H^q(M,E)$, $q\in\{0,\dots,m\}$.
For~$E$ the trivial line bundle over~$M$, these are the Kohn--Rossi cohomology groups $H^{0,q}(M)$ (see~\cite{Tan}).

Let us assume now that a pseudo-Hermitian form $\theta$ is given on $M$ and
that the CR vector bundle $E\to M$ is equipped with a Hermitian inner product
$\langle\cdot,\cdot\rangle_E$. In this setting we have a~direct sum decomposition
\[
T(M)\otimes\CC = T_{10}\oplus T_{01}\oplus \RR T_\theta,
\]
which gives rise to a unique identification of $C^q(M,E)$ with a subbundle of $\Lambda^q(T^*(M))\otimes E$.
And there exists
a {\em canonical connection} $D\colon \Gamma(E)\to\Gamma(T^*(M)\otimes E)$ compatible with $\langle\cdot,\cdot\rangle_E$
and related to the Cauchy--Riemann operator by $D_{X_{01}}u=\bar{\partial}_Eu(X_{01})$, $X_{01}\in T_{01}$,
for any $u\in\Gamma(E)$.
Together with the Webster--Tanaka connection $\nabla^{\rm W}$ we obtain covariant derivatives
\[
D\colon \ \Gamma\big(\Lambda^q(M)\otimes E\big) \to \Gamma\big(\Lambda^{q+1}(M)\otimes E\big),\qquad q\in\{0,\dots,m\},
\]
and \looseness=-1 with respect to a local frame $(E_1,\dots,E_m)$ of $T_{10}$ the Cauchy--Riemann operators are given by
\[
\bar{\partial}_Eu = \sum_{\alpha=1}^m \overline{E_\alpha}^*\wedge D_{\overline{E}_\alpha}u
\]
for $u\in\mathcal{C}^q(M,E)$.

Moreover, for any $q\in\{0,\dots,m\}$, the vector bundle $C^q(M,E)$ is equipped with a Hermitian inner product,
which gives rise via $\operatorname{vol}_\theta$ to an $L_2$-inner product on $\mathcal{C}^q(M,E)$.
This allows the construction of a formally adjoint differential operator
\[
\bar{\partial}^*_E\colon \ \mathcal{C}^{q+1}(M,E) \to \mathcal{C}^{q}(M,E)
\]
to $\bar{\partial}_E$.
With respect to a local frame $(E_1,\dots,E_m)$ the operator $\bar{\partial}^*_E$ is given by
\[
\bar{\partial}^*_Eu = -\sum_{\alpha=1}^m \iota_{\overline{E_\alpha}} D_{E_\alpha}u
\]
for $u\in\mathcal{C}^{q+1}(M,E)$.

Finally, we can construct the {\em Kohn Laplacian}
\[
\square_E := \bar{\partial}^*_E\bar{\partial}_E+ \bar{\partial}_E\bar{\partial}^*_E\colon \ \mathcal{C}^{q}(M,E) \to \mathcal{C}^{q}(M,E),\qquad q\in\{0,\dots,m\},
\]
with respect to $\theta$ on $M$.
This is a $2$nd order differential operator, which is formally self-adjoint with respect to $(\cdot,\cdot)_{L_2}$ on~$\mathcal{C}^{q}(M,E)$.
Due to results of Kohn the operator $\square_E$ is sub- and hypoelliptic.
We put
\[
\mathcal{H}^q(M,E) := \big\{u\in\mathcal{C}^q(M,E)\,|\, \square_Eu=0\big\}
\]
for the space of harmonic $(0,q)$-forms. Since $\square_E$ is formally self-adjoint the harmonic equation $\square_Eu=0$ is equivalent to
$\bar{\partial}_Eu=\bar{\partial}^*_Eu=0$ on~$M$.
It is known that $\mathcal{H}^q(M,E)$ is finite dimensional for any $q\in\{1,\dots,m-1\}$. Moreover,
every class in the Kohn--Rossi cohomology group $H^q(M,E)$ admits a unique harmonic representative, i.e.,
\[
H^q(M,E) \cong \mathcal{H}^q(M,E).
\]
In particular, the Kohn--Rossi cohomology groups $H^q(M,E)$ are finite dimensional for $q\in\{1,\dots,m-1\}$. The
groups $H^0(M,E)$ and $H^m(M,E)$ are infinite dimensional, in general. However, we still have
$H^0(M,E)\cong \mathcal{H}^0(M,E)$ and $H^m(M,E) \cong \mathcal{H}^m(M,E)$ for any $m\geq 2$ (cf.~\cite{Tan}).
In case $m=1$ and $M\subseteq\CC^2$ is embedded as CR manifold $H^0(M,E) \cong \mathcal{H}^0(M,E)$ and $H^1(M,E) \cong \mathcal{H}^1(M,E)$ are certainly true as well.

\section{Vanishing theorems for twisted Kohn--Rossi cohomology} \label{Sec10}

The harmonic theory of the previous section fits well to our discussion of Kohn--Dirac operators and harmonic spinors.
In fact, the square $D_\theta^2$ of the Kohn--Dirac operator has a natural interpretation as
Kohn Laplacian~$\square_E$ if we only make the appropriate choice for the CR line bundle~$E$ (see~\cite{Pet}). This justifies the name for $D_\theta$
and gives rise via the Schr\"odinger--Lichnerowicz-type formula to vanishing results for twisted Kohn--Rossi cohomology (see in \cite[Section~II, \S~7]{Tan}; cf.~\cite{Lee}).

Let $\big(M^{2m+1}, H(M), J\big)$ be a closed manifold equipped with strictly pseudoconvex CR structure of hypersurface type and CR dimension $m\geq 1$.
We fix a pseudo-Hermitian form $\theta$ on~$M$ with ${\rm spin}^\CC$ structure of weight $\ell\in\ZZ$.
The corresponding spinor bundle $\Sigma(H(M))\to M$ decomposes into
\[
\Sigma(H(M)) = \bigoplus_{q=0}^m \Sigma^{\mu_q}(H(M)).
\]
The Kohn--Dirac operator is given by $D_\theta=D_++D_-$. In particular, we have the spinorial complex~(\ref{spincpx}) $\big(\Gamma\big(\Sigma^{\mu_q}\big),D_+\big)$
with cohomology groups, which we denote by $S^q(M)$, $q=0,\dots, m$ (cf. the notion of spinorial cohomology in~\cite{Mic}).

Recall that the chosen ${\rm spin}^\CC$ structure on $(M,\theta)$ is
uniquely determined by some complex line bundle $E(\alpha)\to M$, $\alpha\in H^2(M,\ZZ)$, which is a square root of $\mathcal{K}\otimes L$,
$L=\mathcal{E}(\ell)$ the determinant bundle. Note that we can use the Webster--Tanaka connection $\nabla^{\rm W}$ to define a holomorphic
structure on $E(\alpha)$ through $\bar{\partial}_{E(\alpha)}\eta(X_{01}):=\nabla^{\rm W}_{X_{01}}\eta$, $X_{01}\in T_{01}$, for $\eta\in\Gamma(E(\alpha))$.

Studying the spinor module $\Sigma$ with Clifford multiplication $c$ shows that the spinor bundle $\Sigma(H(M))\to M$ is isomorphic to
\[
\bigoplus_{q=0}^m \Lambda^{q}T_{01}^* \otimes \Sigma^{\mu_0}(H(M)).
\]
Moreover, the factor $\Sigma^{\mu_0}(H(M))$ is isomorphic to the line bundle $E(\alpha)$. In fact, we have
$\Sigma^{\mu_q}(H(M))\cong \Lambda^{q}T_{01}^*\otimes E(\alpha)$ of rank
$\operatorname{rk}_\CC=\left( \begin{smallmatrix} m\\ q\end{smallmatrix}\right)$.
Hence, the identifications{\samepage
\begin{equation}\label{key}
\Gamma(\Sigma^{\mu_q}) \cong \mathcal{C}^q(M,E(\alpha)),\qquad q=0,\dots,m,
\end{equation}
with the chain groups of the Kohn--Rossi complex, twisted by $E(\alpha)$.}

Examining the Clifford multiplication shows that
the operator $\frac{1}{\sqrt{2}}D_+$ corresponds via~(\ref{key}) to the Cauchy--Riemann operator $\bar{\partial}_{E(\alpha)}$.
In particular, we have
\[
S^q(M) \cong H^q(M,E(\alpha)),\qquad q\in\{0,\dots,m\},
\]
for the cohomology groups of the spinorial complex.

Moreover, the formal adjoint $\frac{1}{\sqrt{2}}D_-$ corresponds to $\bar{\partial}^*_{E(\alpha)}$. Hence, via~(\ref{key})
we have
\[
D_\theta = \sqrt{2}\cdot \big(\bar{\partial}_{E(\alpha)} + \bar{\partial}^*_{E(\alpha)}\big)
\]
for \looseness=-1 the Kohn--Dirac operator and $D^2_\theta =2\square_{E(\alpha)}$ for the square (cf.~\cite{Pet}). This shows that harmonic spinors with
$\Theta$-eigenvalue $\mu_q=m-2q$, $q=0,\dots,m$, are in $1$-to-$1$-correspondence with harmonic $(0,q)$-forms with values in~$E(\alpha)$.
In particular, with results of Section \ref{Sec9} we can interpret harmonic spinors on closed~$M$ as representatives of
twisted Kohn--Rossi cohomology classes.

\begin{Theorem} \label{SKR} Let $\big(M^{2m+1}, H(M),J\big)$, $m\geq 2$, be a strictly pseudoconvex and closed CR manifold
with pseudo-Hermitian form $\theta$ and ${\rm spin}^\CC$ structure, determined by $\alpha\in H^2(M,\ZZ)$.
\begin{enumerate}\itemsep=0pt
\item[$1.$] For the space of harmonic spinors to the eigenvalue $\mu_q$, $q=0,\dots,m$, we have
\[
\mathcal{H}^q(\alpha) \cong \mathcal{H}^q(M,E(\alpha)) \cong H^q(M,E(\alpha)) \cong S^q(M).\]
\item[$2.$] The space $\mathcal{H}^q(\alpha)$ of harmonic spinors
on $M$ is finite dimensional for any $q\in\{1,\dots,m-1\}$.
\end{enumerate}
\end{Theorem}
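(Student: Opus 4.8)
The plan is to reduce both statements entirely to Kohn's harmonic theory for the Kohn Laplacian on $(0,q)$-forms (Section~\ref{Sec9}), via the spinor-to-form dictionary already assembled above. The only analytic input will be quoted, not reproved; the work in the proof is bookkeeping with the identifications.

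First I would record the chain of bundle isomorphisms. Analyzing the $\Sigma$-module under Clifford multiplication by the $\overline{E_\alpha}$ gives $\Sigma^{\mu_q}(H(M))\cong \Lambda^q T_{01}^*\otimes \Sigma^{\mu_0}(H(M))$, while the action of the center ${\rm U}(1)$ together with the weight-$\ell$ normalization identifies the line bundle $\Sigma^{\mu_0}(H(M))$ with the square root $E(\alpha)$ of $\mathcal{K}\otimes L$; hence $\Gamma\big(\Sigma^{\mu_q}\big)\cong \mathcal{C}^q(M,E(\alpha))$ as in~(\ref{key}). The point requiring genuine checking is that~(\ref{key}) intertwines the first-order operators. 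With respect to a synchronized frame $(E_1,\dots,E_m)$ one verifies that Clifford multiplication by $E_\alpha$ on a $(0,q)$-form becomes, up to the universal factor, exterior multiplication by $\overline{E_\alpha}^*$, while the $T_{01}$-part of $\nabla^\Sigma$ becomes the $T_{01}$-part of $\nabla^{\rm W}$ on the $\Lambda^q T_{01}^*$-factor tensored with $D|_{T_{01}}=\bar{\partial}_{E(\alpha)}$ on the $E(\alpha)$-factor, recalling $\bar{\partial}_{E(\alpha)}\eta(X_{01})=\nabla^{\rm W}_{X_{01}}\eta$. Consequently $\frac{1}{\sqrt 2}D_+$ corresponds to $\sum_\alpha \overline{E_\alpha}^*\wedge \nabla^{\rm W}_{\overline{E_\alpha}}=\bar{\partial}_{E(\alpha)}$, and taking formal $L_2$-adjoints shows $\frac{1}{\sqrt 2}D_-$ corresponds to $\bar{\partial}^*_{E(\alpha)}$. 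This yields an isomorphism of complexes $\big(\Gamma\big(\Sigma^{\mu_\bullet}\big),D_+\big)\cong\big(\mathcal{C}^\bullet(M,E(\alpha)),\bar{\partial}_{E(\alpha)}\big)$, hence $S^q(M)\cong H^q(M,E(\alpha))$ for every $q$.

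For harmonic spinors, note that for $\phi\in\Gamma\big(\Sigma^{\mu_q}\big)$ one has $D_+\phi\in\Gamma\big(\Sigma^{\mu_{q+1}}\big)$ and $D_-\phi\in\Gamma\big(\Sigma^{\mu_{q-1}}\big)$, so $D_\theta\phi=D_+\phi+D_-\phi=0$ forces $D_+\phi=D_-\phi=0$; transported by~(\ref{key}) this reads $\bar{\partial}_{E(\alpha)}u=\bar{\partial}^*_{E(\alpha)}u=0$, equivalently $\square_{E(\alpha)}u=0$ by formal self-adjointness of $\square_{E(\alpha)}$ (alternatively: $D_\theta^2=2\square_{E(\alpha)}$ and $D_\theta$ is formally self-adjoint, so $D_\theta\phi=0$ iff $\square_{E(\alpha)}u=0$). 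Hence $\mathcal{H}^q(\alpha)\cong\mathcal{H}^q(M,E(\alpha))$.

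Finally I would invoke the cited results of Section~\ref{Sec9}: the Hodge-type isomorphism $H^q(M,E(\alpha))\cong\mathcal{H}^q(M,E(\alpha))$ holds for every $q\in\{0,\dots,m\}$, where the extremal degrees $q=0$ and $q=m$ require the hypothesis $m\geq 2$, and $\mathcal{H}^q(M,E(\alpha))$ is finite dimensional for $q\in\{1,\dots,m-1\}$. Chaining $\mathcal{H}^q(\alpha)\cong\mathcal{H}^q(M,E(\alpha))\cong H^q(M,E(\alpha))\cong S^q(M)$ gives part~1, and the finiteness statement gives part~2. The main (modest) obstacle is the careful local verification that~(\ref{key}) intertwines $D_+$ with $\sqrt{2}\,\bar{\partial}_{E(\alpha)}$; everything else either has already been set up in the preceding discussion or is Kohn's sub-/hypoellipticity and the ensuing harmonic decomposition, which we use as a black box.
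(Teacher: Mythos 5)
Your proposal is correct and follows essentially the same route as the paper: the paper's argument for Theorem~\ref{SKR} is precisely the discussion preceding it in Section~\ref{Sec10} --- the identification $\Gamma\big(\Sigma^{\mu_q}\big)\cong\mathcal{C}^q(M,E(\alpha))$ of~(\ref{key}), the correspondence of $\tfrac{1}{\sqrt2}D_\pm$ with $\bar{\partial}_{E(\alpha)}$ and $\bar{\partial}^*_{E(\alpha)}$, and the appeal to Kohn's harmonic theory from Section~\ref{Sec9}. Your added observation that $D_\theta\phi=0$ forces $D_+\phi=D_-\phi=0$ separately (because they land in different $\Theta$-eigenbundles) is the right way to make the identification of harmonic spaces explicit.
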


\begin{Remark} The cohomology groups $H^q(M,E(\alpha))$, $q\in\{0,\dots,m\}$, of the twisted Kohn--Rossi complex are invariant objects of the underlying CR structure on $M$, whereas the construction of the spaces of harmonic spinors $\mathcal{H}^q(\alpha)$ and harmonic $(0,q)$-forms $\mathcal{H}^q(M,E(\alpha))$ depends on the pseudo-Hermitian structure~$\theta$. It is only for $\mu_q=-\ell$ that we have seen in Section~\ref{Sec7} that harmonic spinors in $\mathcal{H}^{\frac{m+\ell}{2}}(\alpha)$ are solutions of a CR invariant equation.

Theorem~\ref{SKR} shows now that elements
in $\mathcal{H}^q(\alpha)$, $q\in\{0,\dots,m\}$, can be identified for different pseudo-Hermitian forms $\theta$ and $\tilde{\theta}$ via the corresponding
Kohn--Rossi cohomology groups. In particular, all the dimensions $h_q(\alpha)$ of the spaces $\mathcal{H}^q(\alpha)$, $q\in\{1,\dots,m-1\}$, are CR invariant numbers.
\end{Remark}

Recall that a spin structure for the Levi distribution $H(M)$ on $M$ is given by some square root $E(\alpha)$
of the canonical bundle $\mathcal{K}\to M$. We denote the chosen square root by~$\sqrt{\mathcal{K}}$.
On the other hand, the canonical ${\rm spin}^\CC$ structure on~$M$ is given by the trivial line bundle $E(\alpha)=M\times\CC$.
In this case $\ell=m+2$. We have the following vanishing results for Kohn--Rossi cohomology.

\begin{Theorem} \label{VanKR} Let $\big(M^{2m+1}, H(M),J\big)$, $m\geq 2$, be a strictly pseudoconvex and closed CR manifold
with pseudo-Hermitian form $\theta$ and ${\rm spin}^\CC$ structure of weight~$\ell$ determined by $\alpha\in H^2(M,\ZZ)$.
\begin{enumerate}\itemsep=0pt
\item[$1.$] If $\rho_\theta\not\equiv 0$ is negative semidefinite, $|\ell|>m+2$ and $q\in\{1,\dots,m-1\}$, then
\[
H^q(M,E(\alpha)) = \{0\}
\]
for the $q$th $(\alpha$-twisted$)$ Kohn--Rossi cohomology group.
\item[$2.$] If $\rho_\theta\not\equiv 0$ is positive semidefinite, $|\ell|<m+2$ and $q\in\{1,\dots,m-1\}$, then
\[
H^q(M,E(\alpha)) = \{0\}.
\]
\item[$3.$] If $\rho_\theta>0$ is positive definite, $E(\alpha)=M\times\CC$ and $q\in\{1,\dots,m-1\}$, then
\[
H^{0,q}(M) = \{0\}
\]
for the $q$th Kohn--Rossi group.
\item[$4.$] If ${\rm scal}^{\rm W}\not\equiv 0$ is non-negative on a closed CR spin manifold~$M$ of even CR dimension~$m$, then
\[
H^{\frac{m}{2}}\big(M,\sqrt{\mathcal{K}}\big) = \{0\}.
\]
\end{enumerate}
\end{Theorem}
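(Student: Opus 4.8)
The plan is to read off all four statements from the spinorial picture of Section~\ref{Sec10}. By Theorem~\ref{SKR} the twisted Kohn--Rossi group $H^q(M,E(\alpha))$ is isomorphic to the space $\mathcal{H}^q(\alpha)$ of harmonic spinors with $\Theta$-eigenvalue $\mu_q=m-2q$, so it suffices to show that for the stated $q$ there is no non-trivial harmonic spinor in $\Sigma^{\mu_q}$. The main instrument is the Schr\"odinger--Lichnerowicz-type formula~(\ref{formtheo}) in its integrated form~(\ref{sc2}): if $\phi=\phi_{\mu_q}$ satisfies $D_\theta\phi=0$, then $0=\tfrac{2q}{m}\|\nabla_{10}\phi\|^2+\tfrac{2(m-q)}{m}\|\nabla_{01}\phi\|^2+A(\phi)$, where $A(\phi)=\int_M\langle Q^q\phi,\phi\rangle\operatorname{vol}_\theta$ and $Q^q$ is the curvature endomorphism~(\ref{KTerm}). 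Since the two gradient terms are nonnegative, a non-trivial harmonic $\phi$ cannot exist once $Q^q$ is a positive definite Clifford endomorphism on $\Sigma^{\mu_q}$.

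Parts~1 and~2 then follow at once. For $q\in\{1,\dots,m-1\}$ the eigenvalue $\mu_q=m-2q$ is non-extremal, and the hypotheses of part~1, respectively part~2, are exactly those of Proposition~\ref{vani}(b), respectively~\ref{vani}(c). Hence every harmonic spinor is a section of $\Sigma^{m}\oplus\Sigma^{-m}$, so $\mathcal{H}^q(\alpha)=\{0\}$ and therefore $H^q(M,E(\alpha))=\{0\}$.

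Part~3 is the boundary case $\ell=m+2$: here $E(\alpha)=M\times\CC$ forces the determinant bundle to be $\mathcal{K}^{-1}=\mathcal{E}(m+2)$, and $\Sigma^{\mu_q}\cong\Lambda^qT_{01}^*$, so that $H^q(M,E(\alpha))=H^{0,q}(M)$. Since $|\ell|=m+2$ is precisely the value Proposition~\ref{vani} does not cover, one has to examine $Q^q$ directly. Specializing~(\ref{KTerm}) to $\ell=m+2$ gives $Q^q=\bigl(1+\tfrac{\mu_q}{m}\bigr)\bigl(-\tfrac{{\rm i}}{2}\rho_\theta+\tfrac{\operatorname{scal}^{\rm W}}{4}\bigr)$ as a Clifford endomorphism, and the scalar factor $1+\tfrac{\mu_q}{m}=\tfrac{2(m-q)}{m}$ is positive for $q<m$. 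Diagonalising the $J$-invariant real $2$-form $\rho_\theta$ in a unitary frame, with eigenvalues $\rho_1,\dots,\rho_m>0$ (normalised so that $\rho_\theta>0$ means $\rho_j>0$ for all $j$), one computes $\tfrac{\operatorname{scal}^{\rm W}}{4}=\sum_j\rho_j$ and that $-\tfrac{{\rm i}}{2}\rho_\theta+\tfrac{\operatorname{scal}^{\rm W}}{4}$ acts on the rank-one summand of $\Lambda^{0,q}(H(M))$ indexed by a $q$-subset $I\subset\{1,\dots,m\}$ with eigenvalue $2\sum_{j\in I}\rho_j>0$. Thus $Q^q$ is pointwise positive definite on $\Sigma^{\mu_q}$ for every $q\in\{1,\dots,m-1\}$, and~(\ref{sc2}) gives $\mathcal{H}^q(\alpha)=\{0\}$, i.e.\ $H^{0,q}(M)=\{0\}$. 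Carrying out this eigenvalue bookkeeping carefully---in particular pinning down the normalisation constants relating the spectrum of $\rho_\theta$ to $\operatorname{scal}^{\rm W}$, to ${\rm d}\theta$, and to the Clifford action---is the most delicate point of the argument.

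For part~4 the spin structure $\sqrt{\mathcal{K}}$ has trivial determinant bundle, hence weight $\ell=0$, and in the middle degree $q=\tfrac{m}{2}$ one has $\mu_q=0=-\ell$, the CR-covariant case of Section~\ref{Sec7}. Then~(\ref{SLformula}) becomes $\mathcal{D}_0^*\mathcal{D}_0=\Delta^{\rm tr}+\tfrac{\operatorname{scal}^{\rm W}}{4}$; equivalently~(\ref{sc2}) reads $0=\|\nabla_{10}\phi\|^2+\|\nabla_{01}\phi\|^2+\int_M\tfrac{\operatorname{scal}^{\rm W}}{4}|\phi|^2\operatorname{vol}_\theta$ for a harmonic $\phi=\phi_0$. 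As $\operatorname{scal}^{\rm W}\geq 0$, all three nonnegative terms vanish, so $\nabla^{\rm tr}\phi\equiv 0$ and $\operatorname{scal}^{\rm W}|\phi|^2\equiv 0$. The remaining task is to propagate the vanishing of $\phi$ on the non-empty open set $\{\operatorname{scal}^{\rm W}>0\}$ to all of the connected manifold $M$. For $X\in H(M)$ one has $X(|\phi|^2)=2\operatorname{Re}\langle\nabla^\Sigma_X\phi,\phi\rangle=0$ because $\nabla^{\rm tr}\phi=0$; and substituting $\nabla_{10}\phi=\nabla_{01}\phi=0$ into the expression for $\nabla^\Sigma_T\phi$ from Section~\ref{Sec8} gives $\nabla^\Sigma_T\phi=-\tfrac{1}{4m}\rho_\theta\cdot\phi$, whence $T(|\phi|^2)=-\tfrac{1}{2m}\operatorname{Re}\langle\rho_\theta\cdot\phi,\phi\rangle=0$ since Clifford multiplication by the real $2$-form $\rho_\theta$ is skew-Hermitian. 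Hence $|\phi|$ is constant on $M$, and being zero somewhere it is identically zero; thus $\mathcal{H}^{m/2}(\alpha)=\{0\}$ and $H^{m/2}\big(M,\sqrt{\mathcal{K}}\big)=\{0\}$ by Theorem~\ref{SKR}. The only subtle step here, beyond the computations already recorded in Sections~\ref{Sec7} and~\ref{Sec8}, is precisely this last propagation from pointwise vanishing to global vanishing.
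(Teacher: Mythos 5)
Your proof is correct and follows essentially the same route as the paper: parts 1 and 2 via Proposition~\ref{vani} and Theorem~\ref{SKR}, part 3 by checking positivity of $Q^q$ for $\ell=m+2$ (the paper phrases your eigenvalue computation as the identification of $-\tfrac{{\rm i}}{2}\rho_\theta+\tfrac{{\rm scal}^{\rm W}}{4}$ with the Ricci operator on $\Lambda^qT_{01}^*$, matching Tanaka's Weitzenb\"ock term), and part 4 from $Q^{m/2}=\tfrac{{\rm scal}^{\rm W}}{4}$ in the Schr\"odinger--Lichnerowicz formula. Your explicit propagation argument in part 4 (showing $|\phi|$ is constant once $\nabla^{\rm tr}\phi=0$) supplies a step the paper leaves implicit, and is a welcome addition.
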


\begin{proof} Part (1) and (2) of Theorem \ref{VanKR} follow immediately from Proposition~\ref{vani} via the identifications in Theorem~\ref{SKR}.
Part (3) is the statement of Proposition~7.4 on p.~62 in~\cite{Tan} for Kohn--Rossi cohomology.
We reprove this result here.

The curvature term (\ref{KTerm}) decomposes into two summands as follows:
\begin{align*}
Q^q & = - \frac{{\rm i}}{2}\left( \frac{\ell}{m+2}+\frac{\mu_q}{m}\right)\rho_\theta + \left(1+\frac{\ell\mu_q}{m(m+2)}\right)\frac{{\rm scal}^{\rm W}}{4}\\
&= \left( 1+ \frac{\mu_q}{m} \right)\left( -\frac{{\rm i}}{2}\rho_\theta +\frac{{\rm scal}^{\rm W}}{4} \right) -
\frac{{\rm i}(\ell-m-2)}{2(m+2)}\left( \rho_\theta-\frac{{\rm scal}^{\rm W}\cdot {\rm d}\theta}{4m}\right)\\
&=: \frac{2(m-q)}{m}R_* + K.
\end{align*}
The second summand $K$ vanishes for $\ell=m+2$. In any case we have $\operatorname{tr}_\theta K=0$.

Via (\ref{key}) Clifford multiplication on $(0,q)$-forms $u\in \Lambda^qT^*_{01}$ is given by
\[
X\cdot u = \sqrt{2}\cdot\big( X^*_{01}\wedge u - \iota_{X_{01}}u \big)
\]
for any $X\in H(M)$ (see, e.g., \cite{Morg}~and~\cite{Pet}). Then a short computations shows that $R_*=-\frac{{\rm i}}{2}\rho_\theta +\frac{{\rm scal}^{\rm W}}{4}$ acts
on $u\in C^q(M,E(\alpha))$ by
\[
(R_*u)\big(\overline{X}_1,\dots,\overline{X}_q\big) = \sum_{\alpha=1}^q u\big(\overline{X}_1,\dots,\rho_\theta\big(\overline{X}_\alpha\big),\dots, \overline{X}_q\big)
\]
for any $\overline{X}_1,\dots,\overline{X}_q\in \Lambda^qT^*_{01}$, i.e., $R_*$ is the {\em Ricci operator} on $C^q(M,E(\alpha))$.

On the other hand, the action of $K$ is induced by the curvature of the {\em canonical connection} on~$E(\alpha)$ (which differs from the Webster--Tanaka connection, in general). Thus, $Q^q=\frac{2(m-q)}{m}R_*+K$ is exactly the curvature term of the Weitzenb\"ock formula in Proposition~5.1 on p.~47 of~\cite{Tan}.

In particular, for the case of the canonical ${\rm spin}^\CC$ structure $\alpha=0$, we have $\ell=m+2$ and $K=0$.
If $\rho_\theta> 0$ is positive definite on~$M$, $R_*$ is positive definite as well. Application of the Weitzenb\"ock formula for
$\mathcal{C}^q(M,E(\alpha))$ shows that there are no harmonic forms.

Finally, in the spin case $\ell=0$ with even $m$, we have $Q^{\frac{m}{2}}=\frac{{\rm scal}^{\rm W}}{4}$ and~(\ref{formtheo}) shows that there are no harmonic spinors. Hence, no harmonic forms in $\mathcal{C}^{\frac{m}{2}}\big(M,\sqrt\mathcal{{K}}\big)$.
\end{proof}

On the other hand, non-trivial Kohn--Rossi groups pose obstructions to positive Webster curvature on the underlying CR manifold.
We put
\[
\hat{q}=\hat{q}(m,\ell) := \frac{m(m+\ell+2)}{2(m+2)}
\]
and highlight the following result, which resembles the classical obstruction for positive scalar curvature on K\"ahler manifolds
(cf.~\cite{Hitch, Lich}).

\begin{Corollary} \label{ObsKR} Let $\big(M^{2m+1}, H(M),J\big)$, $m\geq 2$, be a strictly pseudoconvex and closed CR manifold with ${\rm spin}^\CC$ structure
of weight $\ell\in\ZZ$ to $\alpha\in H^2(M,\ZZ)$. If
\[
|\ell|<m+2,\qquad \hat{q}\in\ZZ\qquad\mbox{and}\qquad H^{\hat{q}}(M,E(\alpha)) \neq \{0\},
\]
then $M$ admits no adapted pseudo-Hermitian structure $\theta$ of positive Webster scalar curvature ${\rm scal}^{\rm W}>0$.
\end{Corollary}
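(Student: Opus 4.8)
The plan is to argue by contradiction, exploiting that the twisted Kohn--Rossi group $H^{\hat q}(M,E(\alpha))$ is an invariant of the CR structure alone, while the vanishing of harmonic spinors is detected pointwise through the curvature term of the Schr\"odinger--Lichnerowicz-type formula. So suppose, for contradiction, that $M$ carries an adapted pseudo-Hermitian structure $\theta$ with ${\rm scal}^{\rm W}>0$. By the naturality observations of Section~\ref{Sec7}, the bundles $\mathcal{K}$ and $\mathcal{E}(\ell)$ are natural, so the square root $E(\alpha)$ of $\mathcal{K}\otimes\mathcal{E}(\ell)$ and the spinor frames are unchanged under a conformal change of pseudo-Hermitian structure; hence the weight-$\ell$ ${\rm spin}^\CC$ structure attached to $\alpha$ is realized with respect to this $\theta$, giving the associated spinor bundle and its $\Theta$-eigenbundle decomposition.

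First I would locate the index $\hat q$ among the $\Theta$-eigenvalues $\mu_q=m-2q$. Since $|\ell|<m+2$ one has $0<\hat q<m$, so an integral $\hat q$ lies in $\{1,\dots,m-1\}$ and $\mu_{\hat q}$ is a non-extremal $\Theta$-eigenvalue; a one-line computation gives $\mu_{\hat q}=-\frac{m\ell}{m+2}$. This is exactly the value that kills the $\rho_\theta$-contribution in the curvature term $Q^q$ of~(\ref{KTerm}): substituting $\mu_q=-\frac{m\ell}{m+2}$ leaves only
\[
Q^{\hat q}=\left(1-\frac{\ell^2}{(m+2)^2}\right)\frac{{\rm scal}^{\rm W}}{4}
=\frac{(m+2-\ell)(m+2+\ell)}{(m+2)^2}\cdot\frac{{\rm scal}^{\rm W}}{4},
\]
a strictly positive multiple of ${\rm scal}^{\rm W}$ because $|\ell|<m+2$.

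Next I would insert this into the identity~(\ref{sc2}): for any nonzero $\phi\in\Gamma\big(\Sigma^{\mu_{\hat q}}\big)$ the integral $A(\phi)=\int_M\big\langle Q^{\hat q}\phi,\phi\big\rangle\operatorname{vol}_\theta$ is strictly positive once ${\rm scal}^{\rm W}>0$, so $\|D_\theta\phi\|^2>0$ and $\phi$ cannot be harmonic --- this is precisely case~(1) of Proposition~\ref{vani}(a). Thus $\mathcal{H}^{\hat q}(\alpha)=\{0\}$ for the chosen $\theta$. Since $m\geq 2$, Theorem~\ref{SKR} gives $\mathcal{H}^{\hat q}(\alpha)\cong H^{\hat q}(M,E(\alpha))$, whence $H^{\hat q}(M,E(\alpha))=\{0\}$, contradicting the hypothesis. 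This proves the corollary.

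The algebra (the identity $\mu_{\hat q}=-\frac{m\ell}{m+2}$, the reduction of $Q^{\hat q}$, and the bound $0<\hat q<m$) is entirely routine. The one point that needs care is the passage between different pseudo-Hermitian structures: one must be certain that the harmonic-spinor space built from the chosen $\theta$ is the invariantly defined Kohn--Rossi group $H^{\hat q}(M,E(\alpha))$ --- which is exactly the content of Theorem~\ref{SKR} together with the Remark following it --- and that changing $\theta$ alters neither the weight $\ell$ nor the twisting bundle $E(\alpha)$, which is guaranteed by the naturality of $\mathcal{K}$ and $\mathcal{E}(\ell)$.
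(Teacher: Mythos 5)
Your proposal is correct and follows essentially the same route as the paper: compute $\mu_{\hat q}=-\frac{m\ell}{m+2}$ so that the $\rho_\theta$-term in $Q^{\hat q}$ drops out, observe $Q^{\hat q}=\big(1-\frac{\ell^2}{(m+2)^2}\big)\frac{{\rm scal}^{\rm W}}{4}>0$ for $|\ell|<m+2$, conclude via the Schr\"odinger--Lichnerowicz formula that no harmonic spinor exists in $\Gamma\big(\Sigma^{\mu_{\hat q}}\big)$, and invoke Theorem~\ref{SKR} to contradict $H^{\hat q}(M,E(\alpha))\neq\{0\}$. The extra care you take with $0<\hat q<m$ and with the naturality of $E(\alpha)$ under change of $\theta$ matches the paper's Remark following Theorem~\ref{SKR} and adds nothing problematic.
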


\begin{proof} For $\hat{q}\in\ZZ$, we have $\mu_{\hat{q}}=-\frac{m\ell}{m+2}\in\ZZ$ and $Q^{\hat{q}}=\big(1-\frac{\ell^2}{(m+2)^2}\big)\frac{{\rm scal}^{\rm W}}{4}$.
If $|\ell|<m+2$, then $\hat{q}\in\{1,\dots,m-1\}$ and the functions $Q^{\hat{q}}$ and ${\rm scal}^{\rm W}$ have the same sign.
The non-vanishing of $H^{\hat{q}}(M,E(\alpha))$ implies the existence of
a harmonic spinor in $\Gamma(\Sigma^{\mu_{\hat{q}}})$. This is impossible by~(\ref{formtheo}) when ${\rm scal}^{\rm W}>0$.
\end{proof}

\begin{Example} If $M$ is a closed and strictly pseudoconvex CR spin manifold of even CR dimension $m \geq 2$, then
the cohomology group
\[
H^{\frac{m}{2}}\big(M,\sqrt{\mathcal{K}}\big)
\]
poses an obstruction to ${\rm scal}^{\rm W}>0$. We give some concrete example of this case in Section \ref{Sec12} (cf.\ also Corollary~\ref{cordem}).
\end{Example}

\begin{Example} If $m=4$ and $\ell=-3$, then $\hat{q}=1$ and $H^1(M,E(\alpha))$ poses an obstruction to
${\rm scal}^{\rm W}>0$.
\end{Example}

\begin{Example} Let $\big(M^{2m+1},\theta\big)$, $m\geq 2$, be some pseudo-Einstein space with $\operatorname{Ric}^{\rm W}(T)=0$.
Then ${\rm scal}^{\rm W}$ is constant on~$M$. E.g., this happens when the Webster torsion $\tau$ is parallel,
or when the characteristic vector $T_\theta$ is a transverse symmetry of the underlying CR structure
(cf.\ Section~\ref{Sec11}).

We assume a ${\rm spin}^\CC$ structure of weight $\ell$ and $\rho_\theta\not\equiv 0$.
Then, either $\rho_\theta>0$ and, for $|\ell|\leq m+2$ and $1<q<m$,
we have $H^q(M,E(\alpha))=\{0\}$ (by Theorem~\ref{VanKR}).
E.g., {\em Einstein--Sasakian} manifolds of Riemannian signature give rise to such pseudo-Hermitian structures $\theta$
(see, e.g.,~\cite{Blair} for the notion of Sasakian structures).
In Section~\ref{Sec12} we construct {\em regular} Einstein--Sasakian manifolds from K\"ahler geometry.

In the other case $\rho_\theta<0$ and $H^q(M,E(\alpha))=\{0\}$ for $|\ell|> m+2$ and any $1<q<m$.
E.g., {\em Einstein--Sasakian} manifolds of Lorentzian signature fit to this situation.
\end{Example}

\section{Cohomology of regular, torsion-free CR manifolds} \label{Sec11}

Let $\big(M^{2m+1},H(M),J\big)$, $m\geq 2$, be strictly pseudoconvex with pseudo-Hermitian form~$\theta$. We call the characteristic vector
$T_\theta$ {\em regular} if all its integral curves are $1$-dimensional submanifolds of~$M$ and the corresponding leaf space $N$ is a~smooth manifold
of dimension $2m$ with smooth projection $\pi\colon M\to N$. If, in addition, $\theta$ has vanishing Webster torsion $\tau=0$ then $T_\theta$ is an infinitesimal
automorphism of the underlying CR structure and a {\em Killing vector} for the {\em Webster metric} $g^{\rm W}=g_\theta+\theta\circ\theta$ on $M$.
Such a vector $T_\theta$ is called {\em transverse symmetry}. (Here we understand {\em transverse} to $H(M)$.)
It is
straightforward to see that in this case the pseudo-Hermitian structure
 $(H(M),J,\theta)$ projects to a K\"ahler structure on the leaf space~$N$. Note that if~$M$ is closed then $\pi\colon M\to N$ is a circle fiber bundle. In this case we call $\pi\colon (M,\theta)\to N$ a~(regular, torsion-free) {\em CR circle bundle} of complex dimension $m\geq 2$.

\subsection{Holomorphic cohomology and vanishing theorems}
Let us consider the underlying leaf space $N$ with K\"ahler metric $h$, complex structure $J$ and fundamental form $\omega$.
As for any complex manifold, we have the $(p,q)$-forms $\Lambda^{p,q}(N)$ and the Cauchy--Riemann operators
\[
\bar{\partial}\colon \ \Gamma\big(\Lambda^{p,q}(N)\big)\to \Gamma\big(\Lambda^{p,q+1}(N)\big),
\]
which in turn give rise to the {\em Dolbeault cohomology groups} $H^{p,q}(N)$, $p,q\geq 0$.
In the following we are interested in the cohomology groups $H^{0,q}(N)$, $q\geq 0$.
In fact, more generally, let $E'\to N$ be some holomorphic vector bundle and~$\mathcal{O}(E')$ the sheaf of local holomorphic sections
of~$E'$. Then we have the $q$th cohomology group $H^q(N,\mathcal{O}(E'))$ of the sheaf~$\mathcal{O}(E')$, which is, by
{\em Dolbeault's theorem}, isomorphic to $H^{0,q}(N,E')$, $q\geq 0$.

Now let $E'$ be some complex line bundle over the K\"ahler manifold~$N$. We assume that~$E'$ is a root of some power of
the anticanonical line bundle $\mathcal{K}'^{-1}\to N$. This ensures that~$E'$ is equipped with a holomorphic structure and Hermitian inner product,
both compatible with the Levi-Civita connection of the K\"ahler metric.
The pullback $E=\pi^*E'$ is a line bundle over~$M$ with Hermitian inner product and the Webster--Tanaka connection induces some holomorphic structure on~$E$
as well.
Then any smooth section $u'\in\Gamma(E')$ lifts to some smooth section $u=\pi^*u'$ of $E\to M$. By construction, the Lie derivative
$\mathfrak{L}_Tu$ of the lift in characteristic direction vanishes identically on $M$. In fact, any smooth section $u\in\Gamma(E)$ with $\mathfrak{L}_Tu=0$ is the pullback of some unique section
$u'$ in $E'\to N$. We call such sections in $E\to M$ {\em projectable}.
More generally, for $q\geq 0$, we have the subspaces \[\mathcal{C}_{(0)}^q(M,E) \subseteq \mathcal{C}^q(M,E)\] of projectable $(0,q)$-forms in the chain groups
of the Kohn--Rossi complex with values in the line bundle $E$. These subgroups are
naturally identified with the $(0,q)$-forms
$\Gamma\big(\Lambda^{0,q}\otimes E'\big)$ with values in~$E'$ over the K\"ahler manifold~$N$.

Since the Webster torsion $\tau_\theta$ vanishes on~$M$, the holomorphic structures on $E'$ and on its pullback~$E$ are compatible:
$\bar{\partial}_E\pi^*v'=\pi^*\bar{\partial}_{E'}v'$ for any $(0,q)$-form~$v'$ on~$N$.
Also $\bar{\partial}^*_E\pi^*v'=\pi^*\bar{\partial}^*_{E'}v'$ is true.
Now let $\gamma\in H^{0,q}(N,E')$ be a class in the $q$th Dolbeault group. By classical {\em Hodge theory}~$\gamma$ is
uniquely represented by some harmonic $(0,q)$-form~$u'$ with values in~$E'$. In general,
the lift $u=\pi^*u'$ of some non-trivial harmonic~$u'$ is a non-trivial element of $\mathcal{C}_{(0)}^q(M,E)$, which is
harmonic with respect to the Kohn Laplacian~$\square_E$. Thus, we have an inclusion
\[
\pi^*\colon \ \mathcal{H}^q(N,E') \hookrightarrow \mathcal{H}^q(M,E)
\]
of spaces of harmonic forms. This again gives rise to a natural inclusion
\[
\pi^*\colon \ H^q(N,\mathcal{O}(E')) \hookrightarrow H^q(M,E)
\]
of the holomorphic cohomology group $H^q(N,\mathcal{O}(E'))$ into the Kohn--Rossi group $H^q(M,E)$ for any $q\geq 0$.
We denote the image of this inclusion by $H_{(0)}^q(M,E)$. By construction, classes in $H_{(0)}^q(M,E)$ are represented
by projectable harmonic $(0,q)$-forms on $M$, i.e., by elements of~$\mathcal{H}_{(0)}^q(M,E)$.

Let us assume now that the given holomorphic line bundle $E'\to N$ is a square root of $\mathcal{K}'\otimes L'$, where $L'\to N$
is a line bundle of weight $\ell\in\ZZ$, i.e., $L'^{m+2}$ is the $\ell$th power of the anticanonical bundle $\mathcal{K}'^{-1}$ over~$N$. Then $E'$ determines a ${\rm spin}^\CC$ structure on $N$ with determinant bundle~$L'$ of weight~$\ell$. This lifts to a ${\rm spin}^\CC$ structure on $(M,\theta)$ with determinant bundle $L=\pi^*L'$ of weight~$\ell$. Of course, the corresponding spinor bundle $\Sigma'\to N$ pulls back to the spinor bundle
$\Sigma$ over $(M,\theta)$ and harmonic spinors in $\Sigma'$ lift to harmonic spinors in $\Sigma\to M$.

\begin{Theorem} \label{RegVan} Let $\pi\colon (M,\theta)\to N$ be some CR circle bundle of complex dimension $m\geq 2$ with ${\rm spin}^\CC$ structure of weight $|\ell|< m+2$
$($determined by some line bundle $E=\pi^*E'\to M)$ and assume $\hat{q}=\frac{m(m+\ell+2)}{2(m+2)}\in\ZZ$.
\begin{enumerate}\itemsep=0pt
\item[$(a)$] If the scalar curvature ${\rm scal}^h\not\equiv 0$ is non-negative on the K\"ahler manifold~$N$ then
there exist no harmonic spinors on~$M$ to the $\Theta$-eigenvalue $\mu_{\hat{q}}=-\frac{m\ell}{m+2}$ and the Kohn--Rossi cohomology group $H^{\hat{q}}(M,E)=\{0\}$
is trivial.
\item[$(b)$] If the holomorphic cohomology group $H^{\hat{q}}(N,\mathcal{O}(E'))\neq\{0\}$ is non-trivial then the strictly pseudoconvex CR manifold~$M$ admits
no adapted pseudo-Hermitian structure of positive Webster scalar curvature.
\end{enumerate}
\end{Theorem}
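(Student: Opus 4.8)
The strategy is to obtain both parts from results already in hand: part (a) from the vanishing statement Proposition~\ref{vani}(a)(1) together with the identification of harmonic spinors with twisted Kohn--Rossi cohomology in Theorem~\ref{SKR}, and part (b) from the obstruction Corollary~\ref{ObsKR} together with the inclusion $\pi^{*}\colon H^{q}(N,\mathcal{O}(E'))\hookrightarrow H^{q}(M,E)$ constructed in Section~\ref{Sec11}. The only genuinely new geometric ingredient needed is a curvature comparison between the total space and the base, available precisely because a CR circle bundle is torsion-free.

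First I would observe that, since the Webster torsion $\tau_{\theta}$ vanishes, the restriction of $\nabla^{\rm W}$ to the Levi distribution $H(M)$ is $\pi$-related to the Levi--Civita connection of the K\"ahler metric $h$ on $N$ and $g_{\theta}=\pi^{*}h$; hence every curvature operator $R^{\rm W}(X,Y)$ with $X,Y\in H(M)$ is the pullback of the corresponding curvature operator on $(N,h)$, and taking the $g_{\theta}$-trace over $H(M)$ gives
\[
{\rm scal}^{\rm W}=\pi^{*}\big({\rm scal}^{h}\big) .
\]
In particular ${\rm scal}^{h}\ge 0$ and ${\rm scal}^{h}\not\equiv 0$ on $N$ imply ${\rm scal}^{\rm W}\ge 0$ and ${\rm scal}^{\rm W}\not\equiv 0$ on $M$. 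Now $|\ell|<m+2$ together with $\hat{q}\in\ZZ$ forces $\hat{q}\in\{1,\dots,m-1\}$, so the $\Theta$-eigenvalue $\mu_{\hat{q}}=-\frac{m\ell}{m+2}$ is non-extremal and integral, and at this eigenvalue the $\rho_{\theta}$-term of the curvature operator~(\ref{KTerm}) cancels, leaving $Q^{\hat{q}}=\big(1-\frac{\ell^{2}}{(m+2)^{2}}\big)\frac{{\rm scal}^{\rm W}}{4}\ge 0$ with strict positivity at some point (exactly as in the proof of Corollary~\ref{ObsKR}). Thus the hypotheses of Proposition~\ref{vani}(a)(1) are satisfied and $\mathcal{H}^{\hat{q}}(\alpha)=\{0\}$; by Theorem~\ref{SKR} then $H^{\hat{q}}(M,E)\cong\mathcal{H}^{\hat{q}}(\alpha)=\{0\}$, which is part (a). If one prefers to argue directly, the integral identity~(\ref{sc2}) applied to a putative harmonic spinor $\phi_{\mu_{\hat{q}}}$ forces $\nabla_{10}\phi=\nabla_{01}\phi=0$ and $\int_{M}{\rm scal}^{\rm W}|\phi|^{2}\operatorname{vol}_{\theta}=0$; since $\nabla^{\rm tr}\phi=0$ and $d\theta$ is non-degenerate on $H(M)$, the function $|\phi|^{2}$ is constant, hence $\phi$ vanishes identically.

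For part (b) I would argue by contradiction: suppose $M$ carries an adapted pseudo-Hermitian structure $\tilde{\theta}$ with ${\rm scal}^{\rm W}>0$. Note that $\tilde{\theta}$ need not be torsion-free, so the circle-bundle picture is irrelevant here; what matters is that the twisted Kohn--Rossi groups $H^{q}(M,E)$ are invariants of the CR structure alone (see the discussion following Theorem~\ref{SKR}). From $H^{\hat{q}}(N,\mathcal{O}(E'))\neq\{0\}$ and the inclusion $\pi^{*}\colon H^{\hat{q}}(N,\mathcal{O}(E'))\hookrightarrow H^{\hat{q}}(M,E)$ of Section~\ref{Sec11} we obtain $H^{\hat{q}}(M,E)\neq\{0\}$; with $|\ell|<m+2$ and $\hat{q}\in\ZZ$ this contradicts Corollary~\ref{ObsKR}. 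Hence no such $\tilde{\theta}$ can exist.

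The step requiring the most care is the curvature identity ${\rm scal}^{\rm W}=\pi^{*}({\rm scal}^{h})$: one must match the normalization $g_{\theta}=\pi^{*}h$ against the convention $g_{\theta}(X,Y)=\frac{1}{2}d\theta(X,JY)$ and check that, because $\tau_{\theta}=0$, the curvature contributions of $\nabla^{\rm W}$ in the characteristic direction do not enter the $H(M)$-trace defining ${\rm scal}^{\rm W}$. Everything else is a matter of assembling~(\ref{formtheo}), (\ref{key}), Theorem~\ref{SKR}, Proposition~\ref{vani} and Corollary~\ref{ObsKR} with the pullback constructions of Section~\ref{Sec11}.
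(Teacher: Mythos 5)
Your proposal is correct and follows essentially the same route as the paper: identify ${\rm scal}^{\rm W}$ with the lift of ${\rm scal}^h$ using torsion-freeness, note that at $\mu_{\hat q}=-\frac{m\ell}{m+2}$ the Ricci-form term in $Q^{\hat q}$ cancels so the Schr\"odinger--Lichnerowicz formula (via Proposition~\ref{vani} and Theorem~\ref{SKR}) gives part (a), and combine the inclusion $\pi^*\colon H^{\hat q}(N,\mathcal{O}(E'))\hookrightarrow H^{\hat q}(M,E)$ with Corollary~\ref{ObsKR} for part (b). Your added details (that $|\ell|<m+2$ forces $\hat q\in\{1,\dots,m-1\}$, and the direct integral argument showing a harmonic spinor would be $\nabla^{\rm tr}$-parallel of constant length) are correct refinements of the same argument, not a different method.
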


\begin{proof} Since $T_\theta$ is a transverse symmetry, we have $\iota_{T_\theta}R^{\rm W}=0$
for the Webster curvature operator. Hence, the pseudo-Hermitian Ricci form $\rho_\theta$ is the lift of the
Ricci $2$-form on $N$, and the Webster scalar curvature on~$M$ is the lift of the Riemannian scalar curvature on~$N$.

If ${\rm scal}^h\not\equiv0$ is non-negative on $N$, the curvature term $Q^{\hat{q}}=\big(1-\frac{\ell^2}{(m+2)^2}\big)\frac{{\rm scal}^{\rm W}}{4}\not\equiv 0$
is non-negative as well. The Schr\"odinger--Lichnerowicz-type formula implies $H^{\hat{q}}(M,E)=\{0\}$.
On the other hand, we have an inclusion of $H^{\hat{q}}(N,\mathcal{O}(E'))$ into $H^{\hat{q}}(M,E)$. Hence, if
$H^{\hat{q}}(N,\mathcal{O}(E'))$ is non-trivial then $H^{\hat{q}}(M,E)$ as well. By Corollary~\ref{ObsKR},
this obstructs the existence of some pseudo-Hermitian form with positive Webster scalar curvature.
\end{proof}

In case of spin structures we have the following special result.

\begin{Corollary} \label{cordem} Let $\pi\colon (M,\theta)\to N$ be some CR circle bundle of even complex dimension $m\geq 2$ with spin structure $\sqrt{\mathcal{K}'}\to N$.
\begin{enumerate}\itemsep=0pt
\item[$(a)$] If ${\rm scal}^h\not\equiv 0$ is non-negative on $N$, the Kohn--Rossi cohomology group
\[
H^{\frac{m}{2}}\big(M,\sqrt{\mathcal{K}}\big) = \{0\}
\]
is trivial.
\item[$(b)$]
If the holomorphic cohomology group $H^{\frac{m}{2}}\big(N,\mathcal{O}\big(\sqrt{\mathcal{K}'}\big)\big)\neq\{0\}$ is non-trivial then the strictly pseudoconvex CR manifold~$M$ admits no adapted pseudo-Hermitian structure of positive Webster scalar curvature.
\end{enumerate}
\end{Corollary}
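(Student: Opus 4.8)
The plan is to read off Corollary~\ref{cordem} as the special case $\ell=0$ of Theorem~\ref{RegVan}. First I would record that the spin structure $\sqrt{\mathcal{K}'}\to N$, being a square root of the canonical bundle $\mathcal{K}'$ itself, has trivial determinant bundle $L'=N\times\CC$, i.e., it is a ${\rm spin}^\CC$ structure of weight $\ell=0$. Its pullback under $\pi$ is a ${\rm spin}^\CC$ structure on $(M,\theta)$ of weight $\ell=0$ with determinant bundle $L=\pi^*L'$ again trivial, as in the paragraph preceding the corollary. Since $T_\theta$ is a transverse symmetry we have $T_{10}(M)\cong\pi^*T^{1,0}(N)$ as complex bundles, hence $\Lambda^{1,0}(M)\cong\pi^*\Lambda^{1,0}(N)\oplus\CC\theta$ and therefore $\mathcal{K}=\mathcal{K}(M)\cong\pi^*\mathcal{K}'$; consequently $E=\pi^*\sqrt{\mathcal{K}'}$ satisfies $E^2=\pi^*\mathcal{K}'\cong\mathcal{K}$, so $E$ is the pullback square root $\sqrt{\mathcal{K}}$ of $\mathcal{K}$ over $M$.

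Next I would verify the numerical hypotheses of Theorem~\ref{RegVan} in this case. For $\ell=0$ we have $|\ell|=0<m+2$ trivially, and $\hat{q}=\hat{q}(m,0)=\frac{m(m+2)}{2(m+2)}=\frac{m}{2}$, which lies in $\ZZ$ exactly because $m$ is assumed even, with $\hat{q}\in\{1,\dots,m-1\}$ for $m\geq 2$. The associated $\Theta$-eigenvalue is $\mu_{\hat{q}}=-\frac{m\ell}{m+2}=0$, and the curvature term of the Schr\"odinger--Lichnerowicz-type formula reduces to $Q^{\hat{q}}=\bigl(1-\frac{\ell^2}{(m+2)^2}\bigr)\frac{{\rm scal}^{\rm W}}{4}=\frac{{\rm scal}^{\rm W}}{4}$. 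With these identifications, part~(a) is precisely Theorem~\ref{RegVan}(a): if ${\rm scal}^h\not\equiv 0$ is non-negative on $N$ then the lifted Webster scalar curvature is non-negative and not identically zero, so there are no harmonic spinors on $M$ to the eigenvalue $\mu_{m/2}=0$, whence $H^{m/2}\bigl(M,\sqrt{\mathcal{K}}\bigr)=\{0\}$. Part~(b) is Theorem~\ref{RegVan}(b): the inclusion $\pi^*\colon H^{m/2}\bigl(N,\mathcal{O}\bigl(\sqrt{\mathcal{K}'}\bigr)\bigr)\hookrightarrow H^{m/2}\bigl(M,\sqrt{\mathcal{K}}\bigr)$ shows non-triviality of the holomorphic group forces $H^{m/2}\bigl(M,\sqrt{\mathcal{K}}\bigr)\neq\{0\}$, which by Corollary~\ref{ObsKR} obstructs the existence of an adapted pseudo-Hermitian structure with ${\rm scal}^{\rm W}>0$.

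There is no genuinely hard step here; the content is entirely in Theorems~\ref{SKR}, \ref{RegVan} and Corollary~\ref{ObsKR}. The only points demanding care are the two bookkeeping facts that "spin structure on $N$" corresponds to weight $\ell=0$ and that then $\hat{q}=m/2$ is automatically an integer once $m$ is even (so that the hypothesis may be phrased as "$m$ even" rather than "$\hat{q}\in\ZZ$"), together with the canonical identification $\sqrt{\mathcal{K}}=\pi^*\sqrt{\mathcal{K}'}$ coming from $\mathcal{K}=\pi^*\mathcal{K}'$; both are already implicit in the setup of Section~\ref{Sec11}.
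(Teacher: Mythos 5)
Your proposal is correct and coincides with the paper's (implicit) argument: the corollary is stated as the direct specialization of Theorem~\ref{RegVan} to the spin case $\ell=0$, where $\hat{q}=\frac{m}{2}$, $Q^{\hat q}=\frac{{\rm scal}^{\rm W}}{4}$, and $E=\pi^*\sqrt{\mathcal{K}'}=\sqrt{\mathcal{K}}$. The bookkeeping you supply (weight $\ell=0$ for a spin structure, integrality of $\hat q$ from $m$ even, and $\mathcal{K}\cong\pi^*\mathcal{K}'$) is exactly what the paper leaves to the reader.
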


\subsection{Shifting cohomology}

Let $\big(M^{2m+1},\theta\big)$, $m\geq 2$, be some closed pseudo-Hermitian manifold with vanishing Webster torsion $\tau_\theta=0$.
We assume now that the characteristic vector~$T$ of $\theta$
is induced from some free ${\rm U}(1)$-action on~$M$. This implies that $T$ is regular with underlying K\"ahler manifold~$N$.
In fact, here $\pi\colon (M,\theta)\to N$ is a smooth principal ${\rm U}(1)$-bundle and $\theta$ is a connection form.
We call $\pi\colon (M,\theta)\to N$ a {\em CR principal ${\rm U}(1)$-bundle}.
The corresponding holomorphic line bundle is denoted by $F'\to N$ and $F=\pi^*F'$ is its pullback to~$M$.

Let $\mathcal{C}^q(M)$ denote the chain groups of the Kohn--Rossi complex on~$M$. In~\cite{Tan} the differential operator
\begin{align*}
\mathcal{N}\colon \ \mathcal{C}^q(M)&\to \mathcal{C}^q(M),\qquad q\geq 0,\\
u&\mapsto {\rm i}\nabla^{\rm W}_Tu,
\end{align*}
is introduced. Let us denote by
\[
\mathcal{C}_{(\lambda)}^q(M) := \big\{u\in \mathcal{C}^q(M)\colon \mathcal{N}u= \lambda u\big\}
\]
the corresponding $\lambda$-eigenspace.
Since the Webster torsion $\tau_\theta=0$ vanishes, we have in general
$\nabla^{\rm W}_Tu = \mathfrak{L}_Tu$ for the Lie derivative of any chain $u\in\mathcal{C}^q(M)$. This shows that
$\mathcal{C}_{(0)}^q(M)$ are exactly the projectable $(0,q)$-forms on~$M$.

The operator $\mathcal{N}$ is self-adjoint and commutes with $\bar{\partial}$ and $\bar{\partial}^*$.
Hence, $\mathcal{N}$ acts on the harmonic spaces $\mathcal{H}^{0,q}(M)$, $q\geq 0$, as well. All eigenvalues~$\lambda$ are real.
In fact, since $\mathcal{H}^{0,q}(M)$ is finite dimensional for $0<q<m$, we have finitely many real eigenvalues~$\lambda$ and a direct sum decomposition
\[\mathcal{H}^{0,q}(M) = \bigoplus_{\lambda} \mathcal{H}_{(\lambda)}^{0,q}(M) ,
\]
which implies for the Kohn--Rossi groups:
\[
H^{0,q}(M) = \bigoplus_{\lambda} H_{(\lambda)}^{0,q}(M),\qquad 1\leq q\leq m-1 .
\]

In \cite{Tan} it is shown that for any integers $q,s\in\ZZ$ the cohomology groups
\begin{equation*}
H_{(s)}^{0,q}(M) \cong H^q\big(N,\mathcal{O}(F'^s)\big)
\end{equation*}
are naturally identified. Here $F'^s$ denotes the $s$th power of the line bundle $F'\to N$, which is associated to
the given CR principal ${\rm U}(1)$-bundle $\pi\colon (M,\theta)\to N$.

On the other hand, let $F=\pi^*F'\to M$ denote the pullback of $F'$.
Via the projection $\pi\colon M\to N$ the holomorphic cohomology group $H^q(N,\mathcal{O}(F'^s))$ is identified with
the subgroup $H_{(0)}^{q}(M,F^s)\subseteq H^{q}(M,F^s)$ of $F^s$-twisted Kohn--Rossi cohomology. This gives rise to a natural identification
\begin{equation}\label{shift}
H_{(s)}^{0,q}(M) \cong H^q_{(0)}(M,F^s) ,
\end{equation}
where the classes in $H^q_{(0)}(M,F^s)$ are uniquely represented by projectable harmonic $(0,q)$-forms
with values in $F^s$. We call (\ref{shift}) a {\em shift} in Kohn--Rossi cohomology over the CR principal ${\rm U}(1)$-bundle $\pi\colon (M,\theta)\to N$.

We apply (\ref{shift}) to our situation of ${\rm spin}^\CC$ structures.
For this we assume that the pullback $F=\pi^*F'\to M$ of the line bundle $F'\to N$ associated to $\pi\colon M\to N$ has weight $f\in\ZZ$.
Then $E:=F^s=\mathcal{E}(sf)$ determines a ${\rm spin}^\CC$ structure of weight $\ell=2sf+m+2$ on $(M,\theta)$.

\begin{Theorem} Let $\pi\colon \big(M^{2m+1},\theta\big)\to N$, $m\geq 2$, be some CR principal ${\rm U}(1)$-bundle of weight $f\in\ZZ$ and
$E:=\mathcal{E}(sf)$, $s\in\ZZ$. Then
\begin{enumerate}\itemsep=0pt
\item[$1.$] $H^q_{(0)}(M,E)=\{0\}$ for $q<m$ and $s>0$.
\item[$2.$] If $\frac{msf}{m+2}\in\{1-m,\dots,-1\}$ and ${\rm scal}^h>0$ then $H_{(s)}^{0,{\hat{q}}}(M)=\{0\}$ for $\hat{q}=m\big(1+\frac{sf}{m+2}\big)$.
\end{enumerate}
\end{Theorem}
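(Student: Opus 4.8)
The plan is to reduce both assertions to statements about the holomorphic line bundles $F'^s\to N$, using the pullback identification $H^q_{(0)}(M,F^s)\cong H^q\big(N,\mathcal{O}(F'^s)\big)$ together with the shift isomorphism~(\ref{shift}), $H_{(s)}^{0,q}(M)\cong H^q_{(0)}(M,F^s)$. Throughout, $E=F^s=\mathcal{E}(sf)$ carries the ${\rm spin}^\CC$ structure of weight $\ell=2sf+m+2$, and a direct computation gives $\hat q(m,\ell)=\frac{m(m+\ell+2)}{2(m+2)}=m\big(1+\frac{sf}{m+2}\big)$ and $\mu_{\hat q}=m-2\hat q=-\frac{m\ell}{m+2}$. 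I abbreviate $r:=\frac{msf}{m+2}$, so that $\hat q=m+r$ and $\ell=(m+2)\frac{m+2r}{m}$.

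For part~1 I would first note $H^q_{(0)}(M,E)\cong H^q\big(N,\mathcal{O}(F'^s)\big)$. Since $\tau_\theta=0$ and $T_\theta$ is induced by a free ${\rm U}(1)$-action, $\theta$ is a principal connection form on $\pi\colon M\to N$; strict pseudoconvexity of $M$ then forces the associated line bundle $F'\to N$ to be \emph{negative} (equivalently, $M$ is the strictly pseudoconvex unit circle bundle of the negative bundle $F'$, the standard Grauert picture). Hence $F'^s$ is negative for every $s>0$, and the Kodaira vanishing theorem --- in the form $H^q(N,L)=\{0\}$ for $q<\dim_\CC N$ whenever $L\to N$ is negative --- yields $H^q\big(N,\mathcal{O}(F'^s)\big)=\{0\}$ for all $q<m$, hence $H^q_{(0)}(M,E)=\{0\}$.

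For part~2 I would unpack the hypothesis $r=\frac{msf}{m+2}\in\{1-m,\dots,-1\}$: it gives $r\in\ZZ$, so $\hat q=m+r\in\{1,\dots,m-1\}$, and $m+2r\in\{2-m,\dots,m-2\}$, so $|\ell|=(m+2)\big|\frac{m+2r}{m}\big|<m+2$. Because $T_\theta$ is a transverse symmetry, $\iota_{T_\theta}R^{\rm W}=0$, so as in the proof of Theorem~\ref{RegVan} the Webster scalar curvature on $M$ is the lift of ${\rm scal}^h$; thus ${\rm scal}^h>0$ forces ${\rm scal}^{\rm W}>0$ on $M$. With $\mu_{\hat q}=-\frac{m\ell}{m+2}$ the curvature term~(\ref{KTerm}) collapses to $Q^{\hat q}=\big(1-\frac{\ell^2}{(m+2)^2}\big)\frac{{\rm scal}^{\rm W}}{4}>0$, so the integrated Schr\"odinger--Lichnerowicz identity~(\ref{sc2}) --- whose two gradient terms have strictly positive coefficients since $0<\hat q<m$ --- rules out harmonic spinors in $\Gamma\big(\Sigma^{\mu_{\hat q}}\big)$; this is exactly Theorem~\ref{RegVan}(a) applied with $E'=F'^s$. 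By Theorem~\ref{SKR} it follows that $H^{\hat q}(M,E)=\{0\}$, hence $H^{\hat q}_{(0)}(M,E)\subseteq H^{\hat q}(M,E)=\{0\}$, and the shift isomorphism then gives $H_{(s)}^{0,\hat q}(M)=\{0\}$.

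I expect part~2 to be essentially bookkeeping: once the arithmetic hypothesis is translated into $\hat q\in\{1,\dots,m-1\}$ and $|\ell|<m+2$, it is a direct appeal to Theorem~\ref{RegVan}(a) and the shift isomorphism. The genuine obstacle is part~1, where the Schr\"odinger--Lichnerowicz formula gives nothing --- no curvature positivity on $N$ is assumed --- so one must instead (i) recognise that strict pseudoconvexity of $M$ makes $F'$ a negative line bundle and (ii) invoke the classical Kodaira--Akizuki--Nakano vanishing theorem for negative line bundles, which at one stroke covers $H^0$ together with all intermediate $H^q$, $0<q<m$.
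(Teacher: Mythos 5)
Your proposal is correct, and your Part~2 follows the paper's own route almost verbatim: the paper simply cites Theorem~\ref{RegVan} together with the shift isomorphism~(\ref{shift}), and your unpacking of the arithmetic ($\hat{q}=m+\tfrac{msf}{m+2}\in\{1,\dots,m-1\}$, $|\ell|<m+2$, ${\rm scal}^{\rm W}=\pi^*{\rm scal}^h$) is exactly what that citation suppresses. Part~1 is where you genuinely diverge. The paper stays on $M$ and uses Tanaka's commutation identity $\square=\overline{\square}+(m-q)\mathcal{N}$: for a $\square$-harmonic $u$ in the $s$-eigenspace of $\mathcal{N}$ this gives $\|\partial u\|^2+\|\partial^*u\|^2=-(m-q)s\|u\|^2$, which forces $u=0$ when $q<m$ and $s>0$ --- no curvature hypothesis and no descent to $N$ is needed. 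You instead descend via $H^q_{(0)}(M,F^s)\cong H^q\big(N,\mathcal{O}(F'^s)\big)$, identify $F'$ as a negative line bundle (Grauert's picture of $M$ as the circle bundle of a negative bundle), and invoke Akizuki--Nakano. Both arguments are sound and are two faces of the same Bochner--Kodaira positivity; yours makes the geometric content (negativity of $F'$) explicit, while the paper's is self-contained in the CR formalism and sidesteps the one delicate point in your version: which of $F'$, $F'^{-1}$ is ``the'' associated bundle is a sign convention --- the unit circle bundles of $F'$ and $F'^{-1}$ are the same CR manifold carrying opposite ${\rm U}(1)$-actions, so strict pseudoconvexity alone does not single out the negative one. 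The sign is fixed only once the orientation of the Reeb field $T_\theta$ (via $\theta(T_\theta)=1$ for the adapted $\theta$) is tied to the convention entering Tanaka's isomorphism $H^{0,q}_{(s)}(M)\cong H^q\big(N,\mathcal{O}(F'^s)\big)$; with that pinned down your identification is the correct one (it is the only one consistent with the vanishing of $H^0_{(0)}$ for $s>0$), so this is a caveat to spell out rather than a gap.
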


\begin{proof} (1) Comparing the Kohn Laplacian $\square=\bar{\partial}^*\bar{\partial}+\bar{\partial}\bar{\partial}^*$ with
$\overline{\square}= \partial^*\partial+\partial\partial^*$ gives
$ \overline{\square}+ (m-q)\mathcal{N}=\square$
(see \cite{Tan}). Hence, the subgroups $H^q_{(0)}(M,E)\cong H^{0,q}_{(s)}(M)$, $q<m$, are all trivial for eigenvalues $s>0$.

(2) This result follows directly from Theorem~\ref{RegVan} and the shift (\ref{shift}).
\end{proof}

\begin{Example} Let us assume that the CR dimension $m\geq 2$ is even and the weight $f$ of $F\to N$ is a positive factor of $\frac{m}{2}+1$, i.e.,
$s:=-\frac{m+2}{2f}\in\ZZ$. Then $E^2=F^{2s}=\mathcal{K}$, i.e., $F^s\to M$ defines a spin structure
on the CR manifold~$M$. Hence, if the scalar curvature ${\rm scal}^h$ of the underlying K\"ahler manifold is positive,
the Kohn--Rossi subgroup $H_{(s)}^{0,\frac{m}{2}}(M)\cong H_{(0)}^{\frac{m}{2}}(M,F^s)$ is trivial.

In other words, in this situation the (untwisted) Kohn--Rossi subgroup \[ H_{(s)}^{0,\frac{m}{2}}(M) \neq \{0\}\] to the negative $\mathcal{N}$-eigenvalue $s=-\frac{m+2}{2f}\in\ZZ$
is an obstruction to positive Webster scalar curvature on $M$.
\end{Example}

\section[Examples of CR principal U(1)-bundles]{Examples of CR principal $\boldsymbol{{\rm U}(1)}$-bundles} \label{Sec12}

We discuss some examples of CR principal ${\rm U}(1)$-bundles with applications of vanishing theorems
for harmonic spinors and Kohn--Rossi cohomology. The construction here is based on the underlying K\"ahler manifold.

For our construction, let $\big(N^{2m},\omega,J\big)$, $m\geq 2$,
be a~closed K\"ahler manifold such that a~non-trivial multiple $\alpha:=c[\omega]$, $c\in\RR\setminus \{0\}$, of the K\"ahler class $[\omega]\neq 0$
is integral, i.e., $\alpha\in H^2(N,\ZZ)$. By {\em Lefschetz's theorem on $(1,1)$-classes}, there exists
some Hermitian line bundle $\mathcal{L}'\to N$ with first Chern class $c_1(\mathcal{L}')=\alpha$.
Let $\pi\colon \mathcal{M}\to N$ be the corresponding
principal ${\rm U}(1)$-bundle to which $\mathcal{L}'\to N$ is associated. By the {\em $dd^c$-lemma},
this bundle admits a connection form $A_\omega$ with curvature ${\rm d}A_\omega=\pi^*\omega$,
the lift of the K\"ahler form.
The lift of the complex structure $J$ to the horizontal distribution $H(\mathcal{M})$ of the connection~$A_\omega$ gives rise to a CR structure $(H(\mathcal{M}),J)$
on~$\mathcal{M}$, which is by construction strictly pseudoconvex of hypersurface type with CR dimension~$m$.

In addition, we set $\theta:=2A_\omega$. This is an adapted pseudo-Hermitian form, which has by construction vanishing torsion $\tau=0$,
the characteristic vector $T$ is regular and $g_\theta=\frac{1}{2}{\rm d}\theta(\cdot,J\cdot)$
is the lift of the K\"ahler metric to $H(\mathcal{M})$. Thus,
\[
(\mathcal{M}, H(\mathcal{M}), J)
\]
is a CR principal ${\rm U}(1)$-bundle with pseudo-Hermitian form $\theta$ over the K\"ahler manifold~$N$.
Let us call $\pi\colon \mathcal{M}\to N$ the {\em CR $\alpha$-bundle}.

\begin{Example}
Let $N^{2m}$ be a closed K\"ahler--Einstein manifold of positive scalar curvature ${\rm scal}^h>0$ and let
$\pi\colon \mathcal{M}\to N$ be the principal ${\rm U}(1)$-bundle to which the anticanonical bundle $\mathcal{K}'^{-1}\to N$ is associated.
The Ricci $2$-form is given by $\rho=\frac{{\rm scal}^h}{2m}\omega$ and the first Chern class is
$c_1(N)=-c_1(\mathcal{K}')=\big[\frac{1}{2\pi}\rho\big]\in H^2(N,\ZZ)$. Hence, $\pi\colon \mathcal{M}\to N$ is the
CR $c_1(N)$-bundle. The adapted pseudo-Hermitian form $\theta$ is some multiple of the Levi-Civita connection on~$\mathcal{M}$, and
the lift of the Ricci $2$-form $\rho$ along $\pi\colon \mathcal{M}\to N$ is the pseudo-Hermitian Ricci form $\rho_\theta$ of $\theta$ on~$\mathcal{M}$. This shows that $\rho_\theta$ is positive definite in this case.

Let $\mathcal{M}$ be given some ${\rm spin}^\CC$ structure of weight $\ell\in\ZZ$, determined by some line bundle $E\to \mathcal{M}$.
For $|\ell|<m+2$, Proposition~\ref{vani} shows that any harmonic spinor on $\mathcal{M}$ is a section of the extremal bundles.
Accordingly, Theorem~\ref{VanKR} shows that, for $|\ell|\leq m+2$ and any $1\leq q\leq m-1$, the $q$th Kohn--Rossi group
$H^q(\mathcal{M},E)=\{0\}$ is trivial. In particular, if~$N^{2m}$ ($m$~even) is spin, so is~$\mathcal{M}$ and
$H^{\frac{m}{2}}\big(\mathcal{M},\sqrt{\mathcal{K}}\big)=\{0\}$.

For example, the {\em Hopf fibration} $\pi\colon S^{2m+1}\to \CC P^{m}$ over the complex projective space is a CR principal ${\rm U}(1)$-bundle of this
kind. In fact, the associated line bundle to $\pi$ is $\mathcal{K}'^{-1}$, and $S^{2m+1}$ is equipped with the {\em standard CR structure}.
Hence, for any ${\rm spin}^\CC$ structure of weight $|\ell|\leq
m+2$ and $0<q<m$,
we have $H^q\big(S^{2m+1},E\big)=\{0\}$ (cf.~\cite{Tan}). (Note that~$\CC P^{m}$ is spin only for~$m$ odd. However, the standard CR manifold $S^{2m+1}$ is spin for any~$m$.)
\end{Example}

\begin{Example} Let $N^{2m}$, $m\geq 2$ and even, be some closed manifold with {\em hyperK\"ahler metric}~$h$, i.e.,
$h$ is Ricci-flat and its {\em holonomy group} is contained in ${\rm Sp}\big(\frac{m}{2}\big)$.
In particular, $N$ is spin and admits some parallel spinor $\phi_0$ in $\Sigma^0(N)$, i.e., $H^{\frac{m}{2}}\big(N,\mathcal{O}\big(\sqrt{\mathcal{K}'}\big)\big)\neq \{0\}$.

We assume now that $h$ is a {\em Hodge metric}, i.e., the corresponding K\"ahler class $[\omega]\in H^2(N,\ZZ)$ is integral.
E.g., any {\em projective K3 surface} admits some integral K\"ahler class~$[\omega]$.
Then the CR $[\omega]$-bundle $\pi\colon \mathcal{M}\to N$ exists. Furthermore, the pullback of the spin structure on $N$ gives rise to some spin structure
$\sqrt{\mathcal{K}}\to\mathcal{M}$ and the lift $\psi_0:=\pi^*\phi_0$ is a parallel section of $\Sigma^0(\mathcal{M}(H))$ with respect to
the spinorial Webster--Tanaka connection~$\nabla^\Sigma$.
In fact, the {\em basic holonomy group} of the adapted pseudo-Hermitian structure $\theta$ on $\mathcal{M}$
is contained in ${\rm Sp}\big(\frac{m}{2}\big)$ (cf.~\cite{LeiN2}). In
particular, $\psi_0$ is some harmonic spinor in $\Sigma^0(\mathcal{M}(H))$ and the Kohn--Rossi group
$H^{\frac{m}{2}}\big(\mathcal{M},\sqrt{\mathcal{K}}\big)\neq \{0\}$ is non-trivial. Accordingly, Corollary~\ref{ObsKR} states that the CR manifold $\mathcal{M}$ admits no pseudo-Hermitian
structure of positive Webster scalar curvature. Similarly, we can apply Corollary~\ref{cordem}, since $H^{\frac{m}{2}}\big(N,\mathcal{O}\big(\sqrt{\mathcal{K}'}\big)\big)\neq \{0\}$.

Observe that $\psi_0$ is in the kernel of the twistor operator~$\mathcal{P}_0$ as well, i.e.,
$\psi_0$ is an example for a CR twistor spinor of weight $\ell=0$ on some closed, strictly pseudoconvex CR manifold.

Finally, note that in the spin case there is a theory of {\em K\"ahlerian twistor spinors} on
the underlying~$N$ (see~\cite{Mor,Pil}). The lift of such twistor spinors gives rise
to projectable spinors in the kernel of twistor operators on $(M,\theta)$. For the case $\mu_q=\ell=0$, this gives rise to CR twistor spinors.
However, K\"ahlerian twistor spinors on $N$ with $\mu_q=0$ are necessarily parallel. Hence, as in our example,
the corresponding CR twistor spinors on~$M$ are parallel as well. Apart from the standard CR sphere, we do not know
further examples (of non-extremal weight) on closed CR manifolds.
\end{Example}

\subsection*{Acknowledgements}

I would like to thank the referees for their helpful comments.

\pdfbookmark[1]{References}{ref}
\LastPageEnding

\end{document}